\theoremstyle{plain}
\newtheorem*{theorem*}{Theorem}
\newtheorem*{lemma*} {Lemma}
\newtheorem*{corollary*} {Corollary}
\newtheorem*{proposition*}{Proposition}
\newtheorem*{conjecture*}{Conjecture}
\newtheorem{theorem}{Theorem}[section]
\newtheorem{lemma}[theorem]{Lemma}
\newtheorem*{theorem1*}{Theorem 1}
\newtheorem*{theorem2*}{Theorem 2}
\newtheorem*{theorem3*}{Theorem 3}
\newtheorem{proposition}[theorem]{Proposition}
\theoremstyle{remark}
\newtheorem*{remark}{Remark}
\newtheorem{example*}{Example}
\theoremstyle{definition}
\def\gl{\mbox{GL}} \def\Q{\Bbb{Q}}  \def\Z{\Bbb{Z}} \def\R{\Bbb{R}} 
\def\N{\Bbb{N}}  \def\l{\lambda}  
 \def\a{\alpha} \def\g{\gamma}  \def\bp{\begin{pmatrix}}
\def\sm{\setminus} \def\ep{\end{pmatrix}} \def\bn{\begin{enumerate}} 
  \def\div{\mbox{div}} \def\en{\end{enumerate}}
\def\ba{\begin{array}} \def\ea{\end{array}}  
   \def\a{\alpha} \def\b{\beta}
\def\ker{\mbox{Ker}}\def\be{\begin{equation}} \def\ee{\end{equation}} 
 \def\hom{\mbox{Hom}}  
 \def\aut{\mbox{Aut}}  
 \def\dim{\mbox{dim}}  \def\zgt{\Z[G][t^{\pm 1}]}
\def\zt{\Z[t^{\pm 1}]} \def\qt{\Q[t^{\pm 1}]}   
 \def\rt{R[t^{\pm 1}]}
\def\w{\omega}
\def\spin{\operatorname{spin}}
\def\tpm {[t^{\pm 1}]}
\def\MN{\mathcal{MN}}
\def\rnt{R^n\tpm}
\begin{document}
\title[A vanishing theorem for twisted Alexander polynomials]{A vanishing theorem for twisted Alexander polynomials with applications to symplectic 4-manifolds}
\author{Stefan Friedl}
\address{Mathematisches Institut\\ Universit\"at zu K\"oln\\   Germany}
\email{sfriedl@gmail.com}

\author{Stefano Vidussi}
\address{Department of Mathematics, University of California,
Riverside, CA 92521, USA} \email{svidussi@math.ucr.edu} \thanks{S. Vidussi was partially supported by NSF grant
\#0906281.}
\date{\today}

\begin{abstract}
In this paper we show that given any 3--manifold $N$ and any non--fibered class in $H^1(N;\Z)$ there exists a representation such that the corresponding twisted Alexander polynomial is zero.
We obtain this result by extending earlier work of ours and by combining this  with recent results of Agol and Wise on separability of $3$--manifold groups.
This result allows us to
completely classify symplectic $4$--manifolds with a free circle action, and to determine their symplectic cones.
\end{abstract}
\maketitle

\section{Introduction and main results}

A \emph{$3$--manifold pair} is a pair $(N,\phi)$ where  $N$ is a compact, orientable, connected 3--manifold with toroidal or empty boundary,
and $\phi\in H^1(N;\Z)=\hom(\pi_1(N),\Z)$ is a nontrivial class.
We say that a 3--manifold pair
\emph{$(N,\phi)$ fibers over $S^{1}$}
 if there exists
 a fibration $p\colon N\to S^1$ such that the induced map $p_*\colon \pi_1(N)\to \pi_1(S^1)=\Z$ coincides with $\phi$.
 We refer to such $\phi$ as a \emph{fibered class}.

Given a 3-manifold pair  $(N,\phi)$ and an epimorphism  $\a\colon \pi_1(N)\to G$ onto a finite group we can consider the twisted Alexander polynomial $\Delta_{N,\phi}^\a\in \zt$, whose definition is summarized in Section \ref{sectionufd}. It is well--known that the twisted Alexander polynomials of a fibered class $\phi \in H^{1}(N)$ are monic and that their degrees  are determined by the Thurston norm.
In \cite{FV11a} the authors showed that this condition is in fact sufficient to determine fiberedness.
More precisely,  if a nontrivial class $\phi \in H^1(N)$ is not fibered, then we proved in \cite{FV11a} that there exists a twisted Alexander polynomial  $\Delta_{N,\phi}^\a$ that fails to be monic or to have correct degree.
We refer to Theorem \ref{thm:fv08} for the exact statement.

In previous work (see \cite{FV08b}) the authors discussed how a stronger  result, namely the vanishing of some twisted Alexander polynomial  $\Delta_{N,\phi}^\a$,  would follow assuming appropriate separability conditions for the fundamental group of $N$. In this paper, building on the techniques of \cite{FV08b} supplemented with ideas of Wilton--Zalesskii \cite{WZ10}, and using new results on separability for $3$--manifolds groups due to Agol and Wise we  extend the  vanishing result to all $3$--manifolds. More precisely, we have the following

\begin{theorem}\label{mainthmfibintro}
Let $(N,\phi)$ be a 3--manifold pair.
If $\phi \in H^1(N)$ is nonfibered, there exists an epimorphism $\a\colon \pi_1(N)\to G$ onto a finite group $G$ such that
\[ \Delta_{N,\phi}^\a =0.\]
\end{theorem}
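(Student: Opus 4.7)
The strategy is to upgrade the failure-to-be-monic/correct-degree conclusion of Theorem \ref{thm:fv08} (the main result of \cite{FV11a}) to outright vanishing. The machinery for such an upgrade is essentially present in the authors' earlier paper \cite{FV08b}, but was there conditional on a separability hypothesis for $\pi_1(N)$; the new input is that the required separability is now unconditional, thanks to the work of Agol and Wise together with the profinite techniques of Wilton--Zalesskii \cite{WZ10}.

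Step one: invoke Theorem \ref{thm:fv08} to produce an epimorphism $\a_0\colon \pi_1(N)\to G_0$ for which $\Delta_{N,\phi}^{\a_0}$ is either non-monic or has degree strictly less than the value dictated by the Thurston norm. This plays the role of an algebraic ``seed'' witnessing non-fiberedness. Step two: recall that $\Delta_{N,\phi}^{\a}=0$ is equivalent to the corresponding twisted Alexander module having positive $\Q(t)$-rank, i.e.\ a nontrivial free summand. The idea of \cite{FV08b} is to use $\a_0$ together with a finite-index subgroup $K\leq \pi_1(N)$ in which the elements detecting non-fiberedness are separated, and to assemble from these a composite finite quotient $\a\colon \pi_1(N)\to G$ so that the corresponding finite cover $\wti N\to N$ has enough first Betti number along the pullback of $\phi$ to produce the desired free summand. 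The construction requires subgroup separability for a specific class of subgroups of $\pi_1(N)$.

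Step three, which is the main obstacle, is to supply this separability. For hyperbolic pieces of the JSJ decomposition it is delivered by Agol's theorem that hyperbolic 3-manifold groups are virtually compact special (combined with Wise's hierarchy), which in particular implies LERF; for non-hyperbolic pieces one uses classical separability results for Seifert fibered manifolds. The crucial ingredient gluing these statements together is the Wilton--Zalesskii \cite{WZ10} control of the profinite topology on the JSJ graph-of-groups decomposition, which promotes separability in each piece to separability in $\pi_1(N)$ coherently across the JSJ tori. This supplies the finite-index normal subgroup required as input to the \cite{FV08b} construction, and hence an epimorphism $\a$ with $\Delta_{N,\phi}^{\a}=0$. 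The hard part is precisely this coherent gluing across the JSJ decomposition; the rest is a bookkeeping extension of the arguments of \cite{FV08b}.
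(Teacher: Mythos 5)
Your proposal captures the right ingredients (LERF from Agol--Wise and Scott, the gluing role of Wilton--Zalesskii on the JSJ graph, the machinery of \cite{FV08b}), but the logical scaffolding you describe does not match the actual argument, and the mismatch conceals a genuine gap in your Step one and Step two.

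First, the proof does \emph{not} start from Theorem \ref{thm:fv08} and then ``upgrade'' a failure of monicity or degree to vanishing. Theorem \ref{thm:fv08} plays no role whatsoever in the proof of Theorem \ref{mainthmfibintro}. The ``seed'' you describe is not used, and in fact it is unclear how one would promote a non-monic polynomial to a vanishing one by passing to further finite quotients --- there is no general mechanism in \cite{FV08b} or elsewhere that does this. What \cite[Theorem~4.2]{FV08b} actually provides is a \emph{direct} vanishing statement, for a non-fibered pair whose fundamental group is LERF: it produces $\a$ with $\Delta_{N,\phi}^{\a}=0$ outright, by a surface-subgroup separability argument. Your Step two correctly gestures at this mechanism, but your Step one misdirects the whole argument.

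Second, and more substantively, you are missing the structure that makes the JSJ reduction work. The paper first disposes of reducible $N$ (\cite[Lemma~7.1]{FV11a}) and of the case where $\phi$ restricts trivially to some JSJ torus (\cite[Theorem~5.2]{FV08b}). In the remaining case, it invokes the Eisenbud--Neumann theorem \cite[Theorem~4.2]{EN85} to locate a single JSJ piece $N_w$ on which $\phi_w$ is non-fibered; since $\pi_1(N_w)$ is LERF (Wise for hyperbolic pieces --- with Bonahon's geometric finiteness in the closed case --- and Scott for Seifert fibered pieces), the vanishing theorem of \cite{FV08b} applies to $(N_w,\phi_w)$ to give $\a_w$ with $\Delta_{N_w,\phi_w}^{\a_w}=0$. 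The role of Wilton--Zalesskii (Lemma \ref{thm:extendhom}) is then only to extend $\a_w$ to an epimorphism $\b$ of all of $\pi_1(N)$ whose restriction to $\pi_1(N_w)$ factors through $\a_w$. Finally --- and this is absent from your sketch --- one needs a Mayer--Vietoris argument: because $\phi$ is nontrivial on each JSJ torus, the twisted homology of the tori is $\zt$-torsion, so the non-torsion of $H_1(N_w;\zgt)$ (equivalently, via Lemma \ref{lem:deltazero}, the vanishing $\Delta_{N_w,\phi_w}^{\b_w}=0$) propagates to non-torsion of $H_1(N;\zgt)$, i.e.\ $\Delta_{N,\phi}^{\b}=0$. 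Without this last step you have a vanishing polynomial for a piece, not for $N$, and it is precisely the Mayer--Vietoris argument, together with the case split on the restrictions of $\phi$ to JSJ tori, that turns piecewise information into the global conclusion. Your proposal treats ``coherent gluing across the JSJ decomposition'' as a black box, but it is exactly here that the argument has real content, and your sketch as written would not close.
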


In brief, the strategy to prove Theorem \ref{mainthmfibintro} is the following. The techniques of  \cite{FV08b} allow to prove a vanishing result under certain subgroup separability properties for the fundamental group of $N$. Recall that  a group $\pi$ is called \emph{locally extended residually finite}  (\emph{LERF} for short, or subgroup separable) if for any finitely generated subgroup $A\subset \pi$ and any $g\in \pi\sm A$ there exists an epimorphism $\a\colon \pi\to G$ to a finite group $G$ such that $\a(g)\not\in \a(A)$.
Using work of Wilton  and Zalesskii \cite{WZ10}, we will reduce the separability condition to the hyperbolic pieces of $N$.
It has been a long standing conjecture that fundamental groups of hyperbolic 3--manifolds are LERF.
Recently Dani Wise has made remarkable progress  towards an affirmative answer. The following theorem combines the statements
of Corollaries 14.3 and 14.16 and Theorem 16.28 of Wise \cite{Wi12}. (We refer to \cite{Wi09,Wi11,Wi12} and \cite{AFW12} for background material, definitions and further information.)

 \begin{theorem}\label{thm:wiseintro}\textbf{\emph{(Wise)}}
If $N$ is either a closed hyperbolic 3-manifold which admits a geometrically finite surface or if $N$ is  a hyperbolic 3-manifold with nontrivial boundary, then $\pi_1(N)$ is  LERF.
\end{theorem}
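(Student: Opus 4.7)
The plan is to establish LERF by showing that $\pi_1(N)$ is virtually compact special in the sense of Haglund--Wise, and then applying the separability theory available for such groups. Both hypotheses in the statement are designed precisely to enable this strategy.

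First I would build a \emph{quasiconvex hierarchy} for $\pi_1(N)$: a finite sequence of splittings as amalgamated products or HNN extensions over finitely generated quasiconvex edge groups, terminating in trivial groups. In the case with nontrivial boundary, $N$ is Haken and cusped hyperbolic, and one can produce an incompressible, geometrically finite surface to initiate the hierarchy; cutting along it and iterating on the resulting Haken pieces yields the desired quasiconvex hierarchy, where the peripheral $\Z^2$ subgroups are treated via relative hyperbolicity. In the closed case, the hypothesis directly hands us a geometrically finite surface along which to cut first, reducing to the bounded case.

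Next I would invoke Wise's Quasiconvex Hierarchy Theorem from \cite{Wi12}: a (relatively) hyperbolic group admitting a quasiconvex hierarchy is virtually compact special, i.e.\ has a finite--index subgroup which is the fundamental group of a compact special cube complex. From virtual specialness, the Haglund--Wise canonical completion and retraction argument yields that every quasiconvex subgroup of $\pi_1(N)$ is separable in $\pi_1(N)$.

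To upgrade quasiconvex separability to full LERF, I would combine this with tameness of finitely generated Kleinian groups (Agol, Calegari--Gabai) and Canary's covering theorem: together these imply that every finitely generated subgroup of infinite index in a hyperbolic 3--manifold group is either geometrically finite, hence quasiconvex and therefore separable by the previous step, or a virtual fiber, a case handled directly since a fiber subgroup is normal with quotient $\Z$ and is easily separated through the abelian quotient. The principal obstacle is, unavoidably, Wise's Quasiconvex Hierarchy Theorem itself, which rests on a long chain of developments --- Sageev's cubulation from codimension--one quasiconvex subgroups, the Malnormal Special Quotient Theorem, and a delicate induction on hierarchy complexity --- and is the technical crux that makes the entire theorem credible.
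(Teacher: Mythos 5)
The paper does not prove this theorem. As the preceding paragraph makes explicit, Theorem~\ref{thm:wiseintro} ``combines the statements of Corollaries 14.3 and 14.16 and Theorem 16.28 of Wise \cite{Wi12}'' --- it is quoted as a black box, and there is no proof in the paper against which to compare your proposal.

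With that understood, your sketch is a reasonable description of what actually underlies Wise's results: Sageev's cubulation from codimension-one geometrically finite surface subgroups; the Quasiconvex Hierarchy Theorem, resting on the Malnormal Special Quotient Theorem and a difficult induction on hierarchy length, to obtain virtual compact specialness; Haglund--Wise canonical completion and retraction to deduce separability of (relatively) quasiconvex subgroups; and then tameness (Agol, Calegari--Gabai) together with Canary's covering theorem to reduce the geometrically infinite case to virtual fibers, which are separated through the abelian quotient of a suitable finite-index subgroup. Two caveats. First, in the cusped case everything must be read in the relatively hyperbolic sense, with the $\Z^2$ peripheral subgroups as parabolics, and the passage from the hyperbolic to the relatively hyperbolic Quasiconvex Hierarchy Theorem is itself substantial; you gesture at this but it deserves more than a parenthetical. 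Second, what you have written is an outline, not an executable proof: the ``principal obstacle'' you correctly flag is the content of a 181-page monograph, and the authors of this paper quite rightly cite it rather than attempt a synopsis. For the purposes of the paper, only the truth of the LERF conclusion is used, and that is why the theorem appears with an attribution rather than a proof.
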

In Section \ref{sec:proof} we will show how this result can be used to complete the proof of Theorem \ref{mainthmfibintro}.
We want to add that Agol \cite{Ag12} has made further progress in this direction, showing that the fundamental groups of \textit{any} closed hyperbolic 3-manifold is LERF.
(This implies as well, by work of  Manning and Martinez-Pedroza \cite[Proposition~5.1]{MMP10} that the fundamental group  of any hyperbolic 3-manifold with toroidal  boundary is LERF.)

We now discuss the applications of Theorem \ref{mainthmfibintro}.
In Section \ref{section:firstapps} we will see that the combination of Theorem \ref{mainthmfibintro} with work of Goda and Pajitnov \cite{GP05} implies a result on Morse-Novikov numbers of multiples of a given knot.
Furthermore, we will see that the combination of Theorem \ref{mainthmfibintro} with work of  Silver and Williams \cite{SW09b}
gives rise to  a fibering criterion in terms of the number of finite covers of the $\phi$-cover of $N$.
Arguably, however, the most interesting application of Theorem \ref{mainthmfibintro} is contained in Section \ref{section:symp}, and regards the study of closed 4-manifolds with a free circle action which admit a symplectic structure.
The main result of Section \ref{section:symp} is then the proof of the `(1) implies (3)' part of the following theorem:

\begin{theorem}\label{thm:symp}
Let $N$ be a closed 3-manifold and let $p\colon M\to N$ be an $S^1$--bundle over $N$. We denote by $p_*\colon H^2(M;\R)\to H^1(N;\R)$ the map that is given by integration along the fiber.
Let $\psi\in H^2(M;\R)$. Then the following are equivalent:
\bn
\item $\psi$ can be represented by a symplectic structure,
\item $\psi$ can be represented by a symplectic structure that is $S^1$--invariant,
\item $\psi^2>0$ and  $\phi=p_*(\psi) \in H^1(N;\R)$  lies in the open cone on a fibered face of the Thurston norm ball of $N$.
\en
\end{theorem}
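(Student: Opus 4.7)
The plan is to prove the nontrivial implication $(1)\Rightarrow(3)$; the other two directions are essentially classical, with $(2)\Rightarrow(1)$ tautological and $(3)\Rightarrow(2)$ produced by a standard Thurston-type construction of $S^1$-invariant symplectic forms on circle bundles over fibered $3$-manifolds. So suppose $\psi$ is represented by a symplectic form $\omega$. The inequality $\psi^2>0$ is immediate from $\int_M\omega\wedge\omega>0$, so the real content is to show that $\phi=p_*(\psi)$ lies in the open cone on some fibered face of the Thurston norm ball of $N$.

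Since being symplectic is an open condition on cohomology classes and open cones on fibered faces are open subsets of $H^1(N;\R)$, it will suffice to establish the conclusion on a dense set of symplectic classes. I would therefore first perturb $\psi$ to a nearby symplectic class $\psi'$ which, after rescaling, is integral, so that $\phi':=p_*(\psi')\in H^1(N;\Z)$ is a primitive integral class. Assuming for contradiction that $\phi'$ is nonfibered, Theorem \ref{mainthmfibintro} supplies an epimorphism $\a\colon\pi_1(N)\to G$ onto a finite group with $\Delta_{N,\phi'}^{\a}=0$.

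Next I would pass to the finite regular cover $q\colon\wti N\to N$ corresponding to $\ker\a$ and pull back $p\colon M\to N$ to obtain an $S^1$-bundle $\wti p\colon\wti M\to\wti N$ fitting into a finite cover $\wti M\to M$. Then $\wti M$ inherits a symplectic form $\wti\omega$ whose class satisfies $\wti p_*[\wti\omega]=\wti\phi:=q^*\phi'\in H^1(\wti N;\Z)$. The standard Shapiro-type identity relating twisted Alexander polynomials to Alexander polynomials of finite covers yields $\Delta_{\wti N,\wti\phi}\doteq\Delta_{N,\phi'}^{\a}=0$, so the ordinary Alexander polynomial of $(\wti N,\wti\phi)$ vanishes.

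To close the argument I would combine Taubes' theorem with a Meng--Taubes-type formula, in the form adapted by the authors to $S^1$-bundles over $3$-manifolds. Taubes guarantees that the Seiberg--Witten invariants of the symplectic manifold $\wti M$ are nontrivial on a specific ray of $\spinc$-structures determined by $\wti\omega$, while the Meng--Taubes formula identifies the associated Seiberg--Witten polynomial along that ray with $\Delta_{\wti N,\wti\phi}$ up to units; the vanishing of the latter then contradicts the nontriviality of the former. The main obstacle will be precisely this final Seiberg--Witten step: one must ensure the hypotheses needed to apply Taubes (in particular the correct value of $b^+(\wti M)$, possibly after enlarging the cover to avoid the $b^+=1$ situation or invoking a wall-crossing argument), and one must match the ray on which the Alexander polynomial information lives with the basic class forced by $\wti\omega$.
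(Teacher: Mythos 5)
Your outline of $(1)\Rightarrow(3)$ is in the right spirit, and the non-Seiberg--Witten steps match the paper: perturbing to a rational class, invoking Theorem \ref{mainthmfibintro} for a homomorphism $\a$ with $\Delta_{N,\phi}^{\a}=0$, and transferring this (via the identity $\Delta_{N,\phi}^{\a}=\Delta_{N_G,\phi_G}$ from \cite[Lemma~3.3]{FV08a}) to a vanishing untwisted Alexander polynomial on a finite cover. The closing Seiberg--Witten step is also essentially the content of Proposition~\ref{monicness}: Taubes gives $SW_{M}(\kappa_\omega)=1$ with $\kappa_\omega$ extremal, Meng--Taubes turns $SW_N$ into the Alexander polynomial of $N$, and the bridge between the two is Baldridge's formula
$SW_M(p^*\xi)=\sum_{l} SW_N(\xi+le)$, which also disposes of the $b_2^+=1$ chamber worry. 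So the proof is the same, merely phrased as a contradiction instead of the paper's direct monicity statement.

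The genuine gap is the missing case analysis on the Euler class $e\in H^2(N)$ of the circle bundle. Everything you say about Seiberg--Witten invariants of $\wti M$ and the translation to $\Delta_{\wti N,\wti\phi}$ hinges on Baldridge's theorem, which requires $e$ (and hence its pullback $e_G$) to be nontorsion. If $e$ is trivial, i.e.\ $M=S^1\times N$, the $4$-dimensional basic classes are not all pullbacks from $N$ and the formula above simply fails; the paper instead falls back on the $S^1\times N$ case settled in \cite{FV11a} (or on the analogue \cite[Proposition~4.4]{FV08a} of Proposition~\ref{monicness}). If $e$ is nonzero but torsion, one must first pass to a finite cover of $N$ trivializing the bundle, and separately note that fiberedness is preserved under pulling back to finite covers (Stallings), which is a \emph{different} cover from the one associated to your $\ker\a$. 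As written, your argument silently assumes $e$ is nontorsion; without the trichotomy your Baldridge/Meng--Taubes contradiction does not apply when $e$ is torsion.

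One smaller point: your reduction to the rational case by density needs a little more care than ``openness suffices,'' since a priori limits of points in an open cone only land in its closure; the paper cites \cite[Proposition~3.1]{FV10b} for this reduction, and you should either cite that or supply the short argument that a boundary point cannot occur.
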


(See Section \ref{section:symp} for details on the other implications.)

The implication `(1) implies (3)' had already been shown to hold in the following cases:
 \bn
 \item for reducible 3--manifolds by McCarthy \cite{McC01},
 \item if $N$ has vanishing Thurston norm by Bowden \cite{Bow09} and \cite{FV08c}, or if $N$  is a graph manifold, \cite{FV08c},
 \item if the canonical class of the symplectic structure is trivial, \cite{FV11c},
 \item if  $M$ is the trivial $S^1$-bundle over $N$, i.e. the case that $M=S^1\times N$, see \cite{FV11a} for details.
 \en

\begin{remark}
\bn
\item
This paper can be viewed as the (presumably) last paper in a long sequence of papers
\cite{FV08a,FV08b,FV08c,FV11a,FV11b,FV11c} by the authors on twisted Alexander polynomials, fibered 3-manifolds and symplectic structures.
\item
Some steps in the proof of Theorem \ref{thm:symp} (notably Propositions \ref{prop:pullback} and \ref{monicness}) already appeared in an unpublished
manuscript by the authors (see \cite{FV08c}).
\item  Bowden \cite{Bow12} used Theorem \ref{thm:symp} to determine which 4-manifolds with a fixed point free circle action are symplectic.
\en
\end{remark}

 \subsection*{Acknowledgment.} We wish to thank Henry Wilton for very helpful conversations and we are grateful to the referee for thoughtful comments.

\subsection*{Convention.} Unless it says specifically otherwise, all groups are assumed to be finitely generated, all manifolds are assumed to be orientable, connected and compact, and all 3-manifolds are assumed to have empty or toroidal boundary.

\section{Definition  of twisted Alexander polynomials} \label{section:definition}\label{sectionufd}

In this section we quickly recall the definition of twisted Alexander polynomials.
This invariant was initially introduced by
 Lin \cite{Li01}, Wada \cite{Wa94} and Kirk--Livingston \cite{KL99}. We refer to \cite{FV10a} for a detailed presentation.

Let $X$ be a finite CW complex, let $\phi\in H^1(X;\Z)=\hom(\pi_1(X),\Z)$  and let $\a\colon \pi_1(X)\to \gl(n,R)$ be a representation over  a Noetherian unique factorization domain $R$. In our applications we will take $R=\Z$ or $R=\Q$.
 We can now define a left $\Z[\pi_1(X)]$--module
structure on $R^n\otimes_\Z \zt=:\rnt$ as follows:
\[  g\cdot (v\otimes p):=  (\a(g) \cdot v)\otimes (t^{\phi(g)}p), \]
where $g\in \pi_1(X), v\otimes p \in R^n\otimes_\Z \zt = \rnt$.
Put differently, we get a
representation $\a\otimes \phi\colon \pi_1(X)\to \gl(n,\rt)$.

Denote by $ \widetilde{X}$ the universal cover of $X$.
Letting $\pi=\pi_1(X)$, we use
the representation $\a\otimes \phi$ to regard $\rnt$ as a left $\Z[\pi]$--module.
The chain complex $C_*(\widetilde{X})$ is also a left $\Z[\pi]$--module via deck transformations.
Using the natural involution $g\mapsto g^{-1}$ on the group ring $\Z[\pi]$ we can  view $C_*(\widetilde{X})$ as a right $\Z[\pi]$--module.
 We can therefore consider
the  tensor products
\[ C_*^{\phi\otimes \a}(X;\rnt):=C_*(\tilde{X})\otimes_{\Z[\pi_1(X)]}\rnt,\]
which form a complex of $\rt$-modules.
We then consider the $\rt$--modules
$$
H_*^{\phi\otimes \a}(X;\rnt) := H_*(C_*^{\phi\otimes \a}(X;\rnt)).
$$
If $\a$ and $\phi$ are understood we will drop them from the notation.
 Since $X$ is compact and since $\rt$
is Noetherian these modules are finitely presented over $\rt$.
We now define the \emph{twisted Alexander polynomial of $(X,\phi,\a)$} to be the order of $H_1(X;\rnt)$ (see \cite{FV10a} and \cite[Section~4]{Tu01} for details). We will denote it as $\Delta_{X,\phi}^\a\in \rt$.
Note that $\Delta_{X,\phi}^\a\in \zt$  is well-defined up to multiplication by a unit in $\rt$.
We adopt the convention that we drop $\a$ from the notation if $\a$ is the trivial representation to $\gl(1,\Z)$.

If $\a\colon \pi_1(N)\to G$ is a homomorphism to a finite group $G$, then we get the regular representation $\pi_1(N)\to G\to \aut_\Z(\Z[G])$, where the second map is given by left multiplication. We can identify $\aut_\Z(\Z[G])$ with $\gl(|G|,\Z)$ and we obtain the corresponding twisted Alexander polynomial $\Delta_{N,\phi}^\a$.
We will sometimes write $H_*(X;\zgt)$ instead of $H_*(X;\Z^{|G|}\tpm)$.

The following lemma is well-known (see e.g. \cite[Remark~4.5]{Tu01}).

\begin{lemma}\label{lem:deltazero}
Let $(N,\phi)$ be a 3--manifold pair
and let  $\a\colon \pi_1(N)\to G$ be a homomorphism to a finite group. Then $\Delta_{N,\phi}^\a\ne 0$ if and only if $H_1(N;\zgt)$ is $\zt$-torsion.
\end{lemma}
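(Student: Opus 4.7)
The plan is to deduce the lemma directly from the definition of the twisted Alexander polynomial as the order of a finitely presented module, combined with a standard algebraic characterization of when such an order vanishes. Throughout, set $R=\zt$ (a Noetherian UFD with field of fractions $\Q(t)$) and $M=H_1(N;\Z[G]\tpm)$.

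First I would check that $M$ is finitely presented as an $R$-module. Since $N$ is a compact 3-manifold it admits a finite CW structure, so each chain module $C_i(N;\Z[G]\tpm)$ is a finitely generated free $R$-module. As $R$ is Noetherian, the subquotient $M$ is finitely presented, and one may choose a free presentation
\[
R^{a}\xrightarrow{\ P\ }R^{b}\longrightarrow M\longrightarrow 0.
\]
By the definition recalled in this section (see also \cite[Section~4]{Tu01}), the twisted Alexander polynomial $\Delta_{N,\phi}^{\a}$ is, up to a unit, the generator of the $0$-th Fitting ideal $E_{0}(M)$: it is the gcd of the $b\times b$ minors of $P$ when $a\ge b$, and it is $0$ when $a<b$.

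Next I would use the standard characterization of torsion for finitely presented modules over a Noetherian UFD. Tensoring the presentation above with the field of fractions $\Q(t)$ yields the exact sequence
\[
\Q(t)^{a}\xrightarrow{\ P\ }\Q(t)^{b}\longrightarrow M\otimes_{R}\Q(t)\longrightarrow 0.
\]
Since $M$ is $\zt$-torsion if and only if $M\otimes_{R}\Q(t)=0$, the module $M$ is torsion precisely when the matrix $P$ has rank $b$ over $\Q(t)$, which is in turn equivalent to $a\ge b$ together with the non-vanishing of at least one $b\times b$ minor of $P$.

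Finally I would combine the two observations: the gcd of the $b\times b$ minors of $P$ is a nonzero element of $\zt$ if and only if at least one such minor is nonzero, i.e.\ if and only if $M$ has rank $0$ over $R$. This gives $\Delta_{N,\phi}^{\a}\neq 0$ iff $H_{1}(N;\Z[G]\tpm)$ is $\zt$-torsion, as required. There is no real obstacle here; the only point to verify with some care is that the quantity described is a genuine invariant of $M$ (independent of the chosen presentation), which is the standard fact that Fitting ideals are module invariants.
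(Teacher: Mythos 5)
The paper does not supply its own proof of this lemma; it records the statement as well known, citing \cite[Remark~4.5]{Tu01}. Your argument is the standard one that fills in that reference: realize $\Delta_{N,\phi}^{\a}$ as the order of the finitely presented $\zt$-module $M=H_1(N;\Z[G]\tpm)$ (the gcd of the $b\times b$ minors of a presentation matrix, i.e.\ of generators of the zeroth Fitting ideal), tensor the presentation with the fraction field $\Q(t)$, and observe that the nonvanishing of some maximal minor is equivalent both to $\mathrm{rank}_{\Q(t)}P=b$ (hence $M\otimes_{R}\Q(t)=0$, i.e.\ $M$ torsion) and to the order being nonzero. This is correct and complete. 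One small wording slip: the order is not ``the generator of $E_0(M)$'' --- that ideal need not be principal in $\zt$ --- but the gcd of its generators, as you in fact say in the very next clause, so the substance is unaffected.
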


Later, we will need the following well-known lemma.

\begin{lemma}\label{lem:alsozero} Let $(N,\phi)$ be a 3--manifold pair.
Let  $\a\colon \pi_1(N)\to G$ and $\b\colon \pi_1(N)\to H$ be homomorphisms to finite groups such that $\ker(\a)\subset \ker(\b)$.
Then there exists $p\in \qt$ such that
\[ \Delta_{N,\phi}^\a=\Delta_{N,\phi}^\b\cdot p \in \qt.\]
In particular, if $\Delta_{N,\phi}^\b=0$, then $\Delta_{N,\phi}^\a=0$.
\end{lemma}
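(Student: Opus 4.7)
The plan is to leverage the hypothesis $\ker(\a)\subset \ker(\b)$, which forces $\b$ to factor through $\a$, and then to invoke semisimplicity of $\Q[G]$ to split the resulting map between regular representations. In line with the paper's standing convention we assume $\a$ and $\b$ are epimorphisms; then the factorization produces a unique epimorphism $\pi\colon G\to H$ with $\b=\pi\circ \a$. Linearly extending $\pi$ gives a surjection $\Z[G]\to \Z[H]$ of $\Z[G]$-modules, which pulled back along $\a$ is a surjection of $\pi_1(N)$-modules.

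By Maschke's theorem $\Q[G]$ is semisimple, so every surjection of $\Q[G]$-modules admits a $\Q[G]$-linear splitting. Base-changing the above surjection to $\Q$, we therefore obtain a decomposition
\[
\Q[G] \cong \Q[H]\oplus K
\]
as $\Q[G]$-modules, and hence as $\pi_1(N)$-modules. Tensoring with the chain complex of the universal cover of $N$ and taking $H_1$ preserves the direct sum, giving
\[
H_1(N;\Q[G]\tpm) \cong H_1(N;\Q[H]\tpm) \oplus H_1(N;K\tpm)
\]
as $\qt$-modules.

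Since the order of a direct sum of finitely generated modules over the UFD $\qt$ is the product of the individual orders (with the convention that the order of a non-torsion module is $0$), and since the order of a $\zt$-module agrees up to units in $\qt$ with the order of its $\qt$-base change, we conclude
\[
\Delta_{N,\phi}^\a \;=\; \Delta_{N,\phi}^\b\cdot p \in \qt
\]
where $p$ is the order of $H_1(N;K\tpm)$. The ``in particular'' is then immediate: if $\Delta_{N,\phi}^\b=0$, the identity above forces $\Delta_{N,\phi}^\a=0$.

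The only substantive input is Maschke's theorem (semisimplicity of $\Q[G]$): this is precisely why one must base-change to $\Q$, as the analogous surjection $\Z[G]\to\Z[H]$ typically does not split over $\Z[G]$. The remaining steps, namely comparing orders over $\zt$ and $\qt$ and the compatibility of twisted homology with coefficient direct sums, are routine.
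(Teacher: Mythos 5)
Your proof is correct and follows essentially the same path as the paper's: factor $\b$ through $\a$ via the kernel hypothesis, invoke Maschke's theorem to split $\Q[G]\cong\Q[H]\oplus K$ as $\Q[G]$-modules, and conclude multiplicativity of the orders. The paper packages the complement as $\Delta_{N,\phi}^\rho$ for the representation on $P=K$ rather than stating the multiplicativity of orders over direct sums explicitly, but that is a purely cosmetic difference; your observation that one must work over $\Q$ because the surjection $\Z[G]\to\Z[H]$ generally fails to split over $\Z[G]$ is exactly the point the paper's appeal to Maschke addresses.
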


\begin{proof}
We denote by $\a$ also  the regular representation $\pi_1(N)\to \aut_\Q(\Q[G])$, and similarly we denote by $\b$ the regular representation $\pi_1(N)\to \aut_\Q(\Q[H])$. Note that the assumption  $\ker(\a)\subset \ker(\b)$
implies there exists an epimorphism $\g\colon G\to H$ such that $\b=\g\circ \a$.
Note that $\g$ endows $\Q[H]$ with the structure of a left $\Q[G]$-module. It follows from Maschke's theorem
(see \cite[Theorem~XVIII.1.2]{La02}) that there exists a left $\Q[G]$-module $P$ and an isomorphism of
left $\Q[G]$-modules
\[ \Q[G] \cong \Q[H]\oplus P.\]
We now denote by $\rho$ the representation $\pi_1(N)\xrightarrow{\a}G\to \aut(P)\cong \gl(\dim(P),\Q)$.
It now follows from the definitions that
\[ \Delta_{N,\phi}^\a=\Delta_{N,\phi}^\b\cdot \Delta_{N,\phi}^\rho.\]
\end{proof}

\section{Twisted Alexander polynomials and fibered 3--manifolds}

Let $(N,\phi)$ be a 3--manifold pair.
We  denote by $\|\phi\|_T$ the Thurston norm of a class $\phi \in H^{1}(N;\Z)$: we refer to \cite{Th86} for details.
We say that $p(t)\in \zt$ is \emph{monic} if its top coefficient equals $\pm 1$ and  given a nonzero polynomial $p(t)\in \zt$ with $p=\sum_{i=k}^l a_it^i, a_k\ne 0, a_l\ne 0$ we define $\deg(p)=l-k$.

It is known that twisted Alexander polynomials give complete fibering obstructions. In fact the following theorem holds:

\begin{theorem}\label{thm:fv08}  Let $(N,\phi)$ be a $3$--manifold pair where $N \neq S^1 \times S^2, S^1 \times D^2$.
 Then $(N,\phi)$ is fibered if and only if
 for
 any epimorphism $\a\colon \pi_1(N)\to G$ onto a finite group
the twisted Alexander polynomial $\Delta_{N,\phi}^{\a}\in \zt$ is monic
and
\[ \deg(\Delta_{N,\phi}^{\a})= |G| \, \cdot \|\phi\|_T + (1+b_3(N)) \div \, \phi_{\a},\]
where $\phi_\a$ denotes the restriction of $\phi\colon \pi_1(N)\to \Z$ to $\ker(\a)$, and where
we denote by  $\div \,\phi_{\a} \in \N$ the divisibility of $\phi_{\a}$, i.e.
\[ \div \, \phi_{\a} =\max\{ n\in \N \, |\, \phi_{\a}=n\psi \mbox{ for some }\psi\colon \ker(\a)\to \Z\}.\]
 \end{theorem}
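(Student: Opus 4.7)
I would prove the two implications separately. The ``only if'' direction is essentially a classical mapping-torus computation. If $(N,\phi)$ fibers with fiber $\Sigma$ and monodromy $\psi$, then for any epimorphism $\a\colon \pi_1(N)\to G$ onto a finite group, the cover $N_\a$ associated to $\ker(\a)$ inherits a fibration over $S^1$ whose primitive class is $\phi_\a/\div\,\phi_\a$; its fiber $\widetilde\Sigma$ is a cover of $\Sigma$ of degree $|G|/\div\,\phi_\a$. Computing $\Delta_{N,\phi}^\a$ from the corresponding chain complex, one identifies it, up to the standard contributions from $H_0$ and (when $\partial N=\emptyset$) from $H_3$, with the characteristic polynomial of the induced monodromy acting on $H_1(\widetilde\Sigma;\Q)$. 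This polynomial is manifestly monic with degree $b_1(\widetilde\Sigma)$, and combining with $\|\phi\|_T=-\chi(\Sigma)$ together with the multiplicativity of Euler characteristic under covers yields the degree formula, the $(1+b_3(N))\div\,\phi_\a$ correction accounting precisely for the $H_0$ and $H_3$ contributions.

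For the ``if'' direction, the plan is to invoke Stallings' theorem, reducing the problem to showing that $\ker(\phi\colon \pi_1(N)\to \Z)$ is finitely generated. I would proceed in two stages. First, the correct-degree hypothesis at the trivial representation yields the McMullen equality $\|\phi\|_A=\|\phi\|_T$ between the ordinary Alexander and Thurston norms, placing $\phi$ in the cone over some top-dimensional face $F$ of the Thurston norm ball, a candidate fibered face. Second, the data from \emph{all} finite quotients $\a$ — correct degree and monic-ness for each $\Delta_{N,\phi}^\a$ — is used to upgrade this candidate to an actual fibration, by showing that the twisted chain complexes of the $\phi$-covers of $N_\a$ satisfy enough rigidity, uniformly in $\a$, to force finite generation of $\ker(\phi)$ itself.

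The main obstacle is this second stage. Monic plus correct degree for any single representation is insufficient: one can produce nonfibered classes satisfying the conditions for any prescribed finite list of representations, so the conclusion must genuinely exploit the full directed system of finite quotients. The technical crux is to show that any nonfibered class produces some $\a$ with $\Delta_{N,\phi}^\a$ either nonmonic or of smaller-than-expected degree; this is morally the same obstacle addressed by Theorem \ref{mainthmfibintro}. I would expect the proof to proceed by a sutured-manifold hierarchy (Gabai) to extract a concrete geometric witness of nonfiberedness, followed by a separability (LERF-type) argument to transfer the witness into a finite quotient where the algebraic failure becomes visible.
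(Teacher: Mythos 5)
Note first that the paper does not give a proof of this theorem: it is stated as a citation, with the ``only if'' direction attributed to a list of earlier papers (Cha, Kitano--Morifuji, Goda--Kitano--Morifuji, Pajitnov, Kitayama, Friedl--Kim, and the survey \cite{FV10a}) and the ``if'' direction attributed to the main result of \cite{FV11a}. So your attempt has to be measured against those references rather than against an argument in this text.

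Your sketch of the ``only if'' direction is essentially the standard one and is correct in outline: $\Delta_{N,\phi}^\a$ agrees with the untwisted Alexander polynomial of the regular $G$-cover $N_\a$, the lifted fibration identifies it --- after accounting for the $H_0$ and $H_3$ contributions, which produce the $(1+b_3(N))\div\phi_\a$ shift --- with the characteristic polynomial of the lifted monodromy, and the degree count then follows from multiplicativity of Euler characteristic.

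Your sketch of the ``if'' direction has a genuine gap. The first stage does not do what you claim: the untwisted degree hypothesis only yields the McMullen equality $\|\phi\|_A=\|\phi\|_T$, which is satisfied by plenty of non-fibered classes and does not place $\phi$ over any distinguished face of the norm ball, let alone a fibered one. The second stage, ``show that the twisted chain complexes satisfy enough rigidity, uniformly in $\a$, to force finite generation of $\ker\phi$,'' is a restatement of the Stallings reduction rather than a step toward it. What actually happens in \cite{FV11a} is a contrapositive argument: given a non-fibered $\phi$, one decomposes $N$ along a Thurston-norm-minimizing surface $\Sigma$, extracts from the resulting sutured manifold a group-theoretic witness of non-fiberedness (the two inclusions of $\pi_1(\Sigma)$ into $\pi_1(N\setminus\Sigma)$ cannot both be isomorphisms), and then transfers this witness to a finite quotient $\a$ using separability of the subgroups carried by the hierarchy --- Scott's theorem for Seifert pieces, tameness and geometric finiteness for hyperbolic pieces, Hamilton's abelian separability for the gluing tori --- to exhibit $\Delta_{N,\phi}^\a$ as non-monic or of deficient degree. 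Your final paragraph correctly gestures at this strategy, but that gesture is the entire content of the proof rather than a closing technical step, and your first two stages as written would not lead you to it. You are also right that this is ``morally the same obstacle'' as Theorem \ref{mainthmfibintro}, but be careful: that theorem is strictly stronger, proved later, and relies on LERF input (Wise, Agol) that \cite{FV11a} deliberately avoids, so it cannot be invoked here.
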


The `only if' direction has been shown at various levels of generality by
Cha \cite{Ch03}, Kitano and Morifuji \cite{KM05}, Goda, Kitano and Morifuji \cite{GKM05}, Pajitnov \cite{Pa07}, Kitayama \cite{Kiy07}, \cite{FK06} and \cite[Theorem~6.2]{FV10a}.
The `if' direction is the main result of \cite{FV11a}. We also refer to \cite{FV11b} for a more leisurely approach to the proof of the `if' direction.

\section{The proof of Theorem \ref{mainthmfibintro}} \label{sec:proof}

In this section we will prove Theorem \ref{mainthmfibintro}.
The approach we follow is the one we used in \cite{FV08b} to cover the case of a $3$--manifold with certain subgroup separability properties, adding as new ingredient the work of Wilton and Zalesskii \cite{WZ10} (which builds in turn on work of  Hamilton \cite{Ham01}), to cover the case where the $3$--manifold has a nontrivial Jaco--Shalen--Johannson (JSJ) decomposition
(see the original work of Jaco--Shalen \cite{JS79} or Johannson \cite{Jo79} or also \cite{AFW12} for details).
We start with the following.

\begin{lemma}\label{thm:extendhom}
Let $N$ be an irreducible 3-manifold with JSJ pieces $N_v, v\in V$. Let $\a_v\colon \pi_1(N_v)\to G_v, v\in V$ be homomorphisms to finite groups.
Then there exists an epimorphism $\b\colon \pi_1(N)\to G$ to a finite group, such that $\ker(\b)\cap \pi_1(N_v)\subset \ker(\a_v)$ for all $v\in V$.
\end{lemma}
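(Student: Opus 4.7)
The plan is to deduce the statement from the work of Wilton--Zalesskii \cite{WZ10} on the efficiency of the JSJ decomposition. The key input is that, for an irreducible 3--manifold $N$ with nontrivial JSJ decomposition, the profinite topology on $\pi_1(N)$ induces the full profinite topology on each vertex group $\pi_1(N_v)$: that is, every finite--index subgroup $U\leq \pi_1(N_v)$ arises as $\Lambda\cap\pi_1(N_v)$ for some finite--index subgroup $\Lambda\leq \pi_1(N)$. (This rests on Hamilton's work \cite{Ham01} on separability for Seifert fibered pieces, applied at the JSJ tori.) In the degenerate cases, where $V$ consists of a single hyperbolic or Seifert fibered piece, the statement is trivial since we can just take $\b=\a_v$.

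First, for each $v\in V$ set $U_v:=\ker(\a_v)\leq \pi_1(N_v)$, a finite--index subgroup. By the efficiency statement recalled above, I obtain a finite--index subgroup $\Lambda_v\leq \pi_1(N)$ with
\[ \Lambda_v\cap \pi_1(N_v)\subset U_v=\ker(\a_v).\]
Since the vertex set $V$ of the JSJ graph is finite, the intersection $\Lambda:=\bigcap_{v\in V}\Lambda_v$ is again a finite--index subgroup of $\pi_1(N)$.

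Next, to obtain a normal subgroup and hence an epimorphism to a finite group, I replace $\Lambda$ with its normal core $\Lambda':=\bigcap_{g\in \pi_1(N)}g\Lambda g^{-1}$, which remains of finite index in $\pi_1(N)$. Setting $G:=\pi_1(N)/\Lambda'$ and letting $\b\colon \pi_1(N)\to G$ be the natural projection, one has $\ker(\b)=\Lambda'\subset \Lambda\subset \Lambda_v$ for every $v\in V$, so that
\[ \ker(\b)\cap \pi_1(N_v)\subset \Lambda_v\cap \pi_1(N_v)\subset \ker(\a_v),\]
as required.

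The main obstacle is the efficiency input from Wilton--Zalesskii: once that profinite--topology statement is available, the rest of the argument is a routine intersection--and--normal--core manoeuvre. The paper's reliance on \cite{WZ10} and \cite{Ham01} is precisely to secure this separability property at the JSJ tori, which is exactly what is needed to patch together the local quotients $\a_v$ into a single global finite quotient $\b$.
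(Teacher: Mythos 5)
Your proof is correct and rests on exactly the same technology as the paper's, namely the Wilton--Zalesskii results on the profinite topology of $3$--manifold groups (which in turn go back to Hamilton's separability lemmas at the peripheral tori). The only difference is one of packaging: you invoke the efficiency of the JSJ decomposition as a black box (their Theorem~A / the conclusion of their Theorem~3.7) and then finish with a routine intersection-and-normal-core argument, whereas the paper re-runs the relevant part of that argument explicitly --- choosing $n$, then using WZ Theorem~3.2 to find finite-index $L_v\subset\pi_1(N_v)$ whose intersections with the JSJ tori all equal $nm\cdot\pi_1(T)$, so that the subgroups $M_v=K_v\cap L_v$ are \emph{compatible} across edges, after which the gluing step of \cite[Proof of Theorem~3.7]{WZ10} produces the global finite quotient. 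Your shortcut is legitimate because the efficiency theorem already internalizes that matching step; one should just note explicitly that the efficiency statement applies here (the nontrivial cases are Haken, since a nontrivial JSJ decomposition forces an incompressible torus, and the single-piece case is trivial after replacing $G_v$ by $\operatorname{im}\a_v$).
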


\begin{proof}
We write $K_v:=\ker(\a_v), v\in V$. Evidently there exists an $n\in \N$ with the following property:
for each $v\in V$  and each boundary torus $T$ of $N_v$ we have
\[ n\cdot \pi_1(T)\subset \pi_1(T)\cap K_v.\]
(Here $n\cdot \pi_1(T)$ denotes the unique subgroup of $\pi_1(T)\cong \Z^2$ of index $n^2$.)
By \cite[Theorem~3.2]{WZ10} (which relies strongly on Lemmas 5 and 6 of \cite{Ham01}) there exists an $m\in \N$ and finite index normal subgroups
$L_v\subset \pi_1(N_v), v\in V$ such that for each $v\in V$ and each boundary torus $T$ of $N_v$ we have
\[ nm\cdot \pi_1(T)=\pi_1(T)\cap L_v.\]
We now define $M_v=K_v\cap L_v, v\in V$. Note that for each JSJ torus $T$ and two adjacent JSJ pieces the intersection of the corresponding $M$-groups is  $nm\cdot \pi_1(T)$. It now follows from a standard argument (see e.g. \cite[Proof~of~Theorem~3.7]{WZ10}) that there exists a finite index normal subgroup $M$ of
$\pi_1(N)$ such that $M\cap \pi_1(N_v)\subset M_v$ for any $v\in V$. Clearly the epimorphism $\pi_1(N)\to \pi_1(N)/M$ has the desired properties.
\end{proof}

For the reader's convenience we recall the statement of Theorem \ref{mainthmfibintro}.

\begin{theorem}Let $(N,\phi)$ be a 3--manifold pair.
Then if $\phi \in H^1(N)$ is nonfibered, there exists an epimorphism $\a\colon \pi_1(N)\to G$ onto a finite group $G$ such that
\[ \Delta_{N,\phi}^\a =0.\]
\end{theorem}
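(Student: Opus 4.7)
The plan is to leverage the vanishing criterion of \cite{FV08b}, which establishes $\Delta_{N,\phi}^{\a}=0$ for some finite-quotient representation $\a$ provided that $\pi_1(N)$ enjoys a certain subgroup-separability property relative to the sutured-manifold decomposition dual to the nonfibered class $\phi$. My task would then be to deduce this ambient separability from LERF-ness of the JSJ pieces of $N$, appealing to Wise's Theorem \ref{thm:wiseintro} on the hyperbolic pieces and to Lemma \ref{thm:extendhom} for the global assembly.

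First I would dispose of two trivial cases: $N\in\{S^1\times D^2,\, S^1\times S^2\}$, where every nontrivial class in $H^1(N)$ is fibered and the hypothesis is empty, and reducible $N$, where a nontrivial prime summand forces $H_1(N;\zgt)$ to have positive $\Z[t^{\pm 1}]$-rank for suitable $G$ and the conclusion follows from Lemma \ref{lem:deltazero}. For irreducible $N$ with JSJ decomposition $N=\bigcup_{v\in V}N_v$, I would localize the separability problem to a single piece: if some restriction $\phi|_{N_v}$ is itself nonfibered, then the finitely generated subgroup $A\subset \pi_1(N)$ and element $g\notin A$ whose separation is required by the \cite{FV08b} criterion can be chosen to lie inside $\pi_1(N_v)$; otherwise every $\phi|_{N_v}$ is fibered but the fibrations are incompatible across some JSJ torus, in which case passing to a finite cover controlled by \cite{WZ10} returns us to the previous case.

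Once $A$ and $g$ are confined to a single piece $\pi_1(N_v)$, Theorem \ref{thm:wiseintro} supplies an epimorphism $\a_v\colon \pi_1(N_v)\to G_v$ to a finite group separating $g$ from $A$; in the closed hyperbolic subcase one uses the essential surface dual to $\phi|_{N_v}$ as the required geometrically finite surface, while the bounded hyperbolic and Seifert-fibered subcases are classical. Lemma \ref{thm:extendhom} then produces a global $\b\colon \pi_1(N)\to G$ with $\ker(\b)\cap \pi_1(N_v)\subset\ker(\a_v)$, so the separation inside $\pi_1(N_v)$ persists as a separation inside $\pi_1(N)$, and the \cite{FV08b} machine yields $\Delta_{N,\phi}^{\b}=0$ (with Lemma \ref{lem:alsozero} on standby if a larger representation with the same vanishing property is required downstream). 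The main obstacle I expect is the monodromy-mismatch subcase: carefully localizing the non-fiberedness to a single hyperbolic piece after a controlled finite cover requires the full strength of the Hamilton-Wilton-Zalesskii peripheral matching encoded in Lemma \ref{thm:extendhom}, and one must verify that the finitely generated subgroup $A$ produced by the \cite{FV08b} program genuinely descends into $\pi_1(N_v)$ rather than being inherently global.
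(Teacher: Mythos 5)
Your overall architecture --- reduce to the irreducible case, exploit the JSJ decomposition, invoke Wise's LERF theorem on the hyperbolic pieces, and glue finite quotients via Lemma \ref{thm:extendhom} --- matches the paper's, but the central mechanism of your proof is different from, and in fact does not work as stated, compared with what the paper actually does.

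The principal gap is your localization claim: you propose to apply the separability-based vanishing criterion of \cite{FV08b} to $\pi_1(N)$ directly, asserting that the finitely generated subgroup $A$ (carried by a Thurston-norm-minimizing decomposition surface for $\phi$) and the element $g$ to be separated from it ``can be chosen to lie inside $\pi_1(N_v)$'' for a single JSJ piece. There is no reason for this to hold: a norm-minimizing surface for a class $\phi$ on $N$ typically crosses several JSJ pieces, so $A$ is not contained in any one $\pi_1(N_v)$, and the \cite{FV08b} program does not produce data confined to a single piece. The paper sidesteps this entirely. It does \emph{not} try to establish a global separability hypothesis on $\pi_1(N)$; instead it applies \cite[Theorem~4.2]{FV08b} to the LERF group $\pi_1(N_w)$ of the nonfibered piece to obtain a purely local vanishing $\Delta_{N_w,\phi_w}^{\a_w}=0$, extends $\a_w$ to a finite quotient $\b$ of $\pi_1(N)$ using Lemma \ref{thm:extendhom} together with Lemma \ref{lem:alsozero}, and then \emph{transfers} the vanishing from the piece to all of $N$ with a Mayer--Vietoris argument: since $\phi$ restricts nontrivially to every JSJ torus, the twisted homology of each $T_e$ is $\zt$-torsion, so non-torsionness of $H_1(N_w;\zgt)$ propagates to $H_1(N;\zgt)$. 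This homological transfer, absent from your proposal, is the crux.

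Two further points. First, the ``monodromy-mismatch subcase'' you flag is a phantom once the torus-triviality case is split off: the paper first disposes of the case where $\phi$ vanishes on some JSJ torus (citing \cite[Theorem~5.2]{FV08b}), and then, when $\phi$ is nontrivial on every JSJ torus, cites Eisenbud--Neumann \cite[Theorem~4.2]{EN85}, which guarantees that nonfiberedness of $(N,\phi)$ forces nonfiberedness of some piece $(N_w,\phi_w)$. Thus ``all pieces fibered but incompatibly'' simply cannot occur in the regime where the Mayer--Vietoris step is applied, and no detour through controlled finite covers is needed for this. Your proposal omits the torus-triviality case altogether and misattributes the difficulty. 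Second, a small but necessary point in the closed hyperbolic subcase: one must argue that a nonfibered $\phi$ yields an incompressible surface that is geometrically finite (Bonahon--Thurston) so that Wise's Theorem \ref{thm:wiseintro} applies; merely saying ``use the essential surface dual to $\phi$'' skips the geometric finiteness justification that the paper supplies.
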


\begin{remark} As mentioned above, a proof of this theorem appears in \cite{FV08b} for  a $3$--manifold $N$ in the following two cases:
\bn
\item if the subgroups carried by Thurston norm minimizing surfaces are separable, or
\item if $N$ is a graph manifold.
\en
Recent work of Przytycki and Wise \cite[Theorem~1.1]{PW11} shows that the separability condition (1) is in fact satisfied for all graph manifolds, which gives an alternative proof for (2).
\end{remark}

\begin{proof} If $N$ is reducible, the statement is proven to hold in \cite[Lemma~7.1]{FV11a}. Hence we restrict ourselves to the case where $N$ is irreducible.
We denote by $\{N_{v}\}_{v\in V}$ the set of JSJ components  of $N$ and by $\{T_e\}_{e\in E}$ the set of JSJ tori in the JSJ decomposition of $N$.
Let  $\phi\in H^1(N;\Z)$ be  a nonfibered class.
If the restriction of $\phi$ to one of the JSJ tori is trivial, then it follows from \cite[Theorem~5.2]{FV08b}
that there exists an epimorphism $\a\colon \pi_1(N)\to G$ onto a finite group $G$ such that $\Delta_{N,\phi}^\a =0$.
We will henceforth assume that the restriction of $\phi$ to all JSJ tori is nontrivial.

Given $v\in V$ we denote by $\phi_v$ the restriction of $\phi$ to $N_v$.
Since $\phi$ is nonfibered it follows from   \cite[Theorem~4.2]{EN85} that there exists a $w\in V$, such that
$(N_w,\phi_w)$ is not fibered. If $N_{w} = N$ is hyperbolic and closed  (i.e. the JSJ decomposition  is trivial), as we assume that $(N,\phi)$ is not fibered, $N$ admits an incompressible surface $\Sigma$ which does not lift to a fiber in any finite cover. By a result of Bonahon and Thurston (see \cite{Bon86}) the surface $\Sigma$ is a geometrically finite surface. It then follows from Theorem \ref{thm:wiseintro} that $\pi_1(N)$ is LERF.
If $N_w$ is hyperbolic and has nontrivial boundary, Theorem \ref{thm:wiseintro} guarantes again that $\pi_1(N_w)$ is LERF. The same holds if $N_w$ is Seifert fibered, by Scott's theorem  (see \cite{Sc78}).

By \cite[Theorem~4.2]{FV08b} there exists in either case an epimorphism $\a_w\colon \pi_1(N_w)\to G_w$ onto a finite group $G_w$ such that $\Delta_{N_w,\phi_w}^{\a_w}=0$.
By Lemma \ref{thm:extendhom} above
there exists  a homomorphism $\b\colon \pi_1(N)\to G$ to a finite group, such that $\ker(\b)\cap \pi_1(N_w)=\ker(\b_w) \subset \ker(\a_w)$.
(Here, given $v\in V$ we denote by $\b_v$ the restriction of $\b$ to $N_v$.)
By Lemma \ref{lem:alsozero} we also have $\Delta_{N_w,\phi_w}^{\b_w}=0$.

Now, there exists a Mayer--Vietoris type long exact sequence of twisted homology groups:
\[ \dots \to \bigoplus_{e\in E} H_i(T_e;\zgt)\to \bigoplus_{v\in V} H_i(N_v;\zgt)\to H_i(N;\zgt)\to \dots.\]
Recall that  we assumed that the restriction of $\phi$ to $T_e$ is nontrivial for each $e$. It follows from a straightforward calculation (see e.g. \cite[p.~644]{KL99}) that $H_i(T_e;\zgt)$ is $\zt$-torsion for each $i$ and each $e\in E$.
As  $\Delta_{N_w,\phi_w}^{\b_v}=0$, Lemma \ref{lem:deltazero} implies that  $H_1(N_w;\zgt)$ is not $\zt$-torsion.
But then it follows from the above exact sequence that $H_1(N;\zgt)$ is not $\zt$-torsion either, i.e. $\Delta_{N,\phi}^\b=0$.
\end{proof}

\section{Applications to 3--manifold topology}\label{section:firstapps}

Let $K\subset S^3$ be a knot. We denote by $X_K:=S^3\sm \nu K$ the exterior of $K$.
A regular Morse function on $X_K$ is a function $f\colon X_K\to S^1$ such that all singularities are nondegenerate and which restricts on the boundary of $X_K$ to a fibration
with connected fiber. Given a Morse map $f$ we denote by $m_i(f)$ the number of critical points of index $i$.
A regular Morse function $f\colon X_K\to S^1$ is called \emph{minimal} if $m_i(f)\leq m_i(g)$ for any regular Morse map $g$ and any $i$.
It is shown by  Pajitnov, Rudolph and Weber \cite{PRW02} that any knot admits a minimal regular Morse function.
Its number of critical points is called the \emph{Morse-Novikov number of $K$} and denoted by $\MN(K)$.
Note that $K$ is fibered if and only if $\MN(K)=0$. It is   known that $\MN(K_1\# K_2)\leq \MN(K_1)+\MN(K_2)$ (see \cite[Proposition~6.2]{PRW02}), but it is not known whether equality holds. It is not even known whether $\MN(n\cdot K)=n\cdot \MN(K)$.
The following theorem can be viewed as evidence to an affirmative answer for the latter question.

\begin{theorem}
Let $K\subset S^3$ be a nonfibered knot. Then there exists a $\l>0$, such that
\[ \MN(n\cdot K)\geq n\cdot \l.\]
\end{theorem}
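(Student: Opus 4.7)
The strategy is to combine Theorem \ref{mainthmfibintro} with a Morse--Novikov lower bound due to Goda--Pajitnov \cite{GP05}: for any knot $K'$ and any epimorphism $\alpha\colon \pi_1(X_{K'}) \to G$ onto a finite group, their work provides an inequality of the form
\[ \MN(K') \;\geq\; c(|G|)\,\rank_{\zt} H_1(X_{K'};\Z[G]\tpm)\]
for a positive constant $c(|G|) > 0$ depending only on $|G|$, where $\Z[G]\tpm$ carries the twisted coefficients coming from $\alpha$ and the regular representation. By Lemma \ref{lem:deltazero}, whenever $\Delta^\alpha_{X_{K'},\phi_{K'}}=0$ the right--hand rank is strictly positive, so this bound is nontrivial precisely in the situation produced by Theorem \ref{mainthmfibintro}.

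Starting from the nonfibered knot $K$, apply Theorem \ref{mainthmfibintro} to $(X_K,\phi_K)$ (with $\phi_K$ the abelianization) to obtain $\alpha\colon \pi_1(X_K) \to G$ with $\Delta^\alpha_{X_K,\phi_K}=0$, and set
\[ r := \rank_{\zt} H_1(X_K;\Z[G]\tpm) > 0. \]
The exterior of the $n$-fold connected sum $nK=K\#\cdots\# K$ is assembled by gluing $n$ copies of $X_K$ along $n-1$ meridional annuli, so $\pi_1(X_{nK})$ is an iterated amalgamated product of $n$ copies of $\pi_1(X_K)$ over the meridian subgroup $\Z$. Since $\alpha$ sends the meridian to a fixed element $g_0\in G$, the maps on the individual copies glue to a well-defined extension $\tilde\alpha\colon \pi_1(X_{nK})\to G$.

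The crux is then a twisted Mayer--Vietoris calculation for this decomposition. The class $\phi$ restricts nontrivially to each gluing annulus $A$, so by a standard computation (as in \cite[p.~644]{KL99}) the twisted homology $H_*(A;\Z[G]\tpm)$ is $\zt$-torsion. The Mayer--Vietoris sequence for $X_{nK}$ therefore collapses modulo $\zt$-torsion to an isomorphism
\[ H_1(X_{nK};\Z[G]\tpm) \;\cong\; \bigoplus_{i=1}^n H_1(X_K;\Z[G]\tpm), \]
yielding $\rank_\zt H_1(X_{nK};\Z[G]\tpm) = nr$. Applying the Goda--Pajitnov bound to $\tilde\alpha$ then gives $\MN(nK) \geq c(|G|)\cdot nr$, so the theorem holds with $\lambda := c(|G|)\,r$. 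The main obstacle is simply extracting the precise form of the Goda--Pajitnov inequality from \cite{GP05} in the twisted regular-representation setting; the Mayer--Vietoris rank computation is routine once the annular homology is dealt with, and no further input beyond Theorem \ref{mainthmfibintro} is required.
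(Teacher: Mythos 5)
Your proposal follows the same route as the paper, which obtains the theorem as an immediate consequence of Theorem \ref{mainthmfibintro} and Goda--Pajitnov's Theorem 4.2 and Corollary 4.6 in \cite{GP05}. Your explicit Mayer--Vietoris computation for the $n$-fold connected sum is in effect a re-derivation of the content of their Corollary 4.6, and your phrasing of the Morse--Novikov lower bound via $\rank_{\zt}H_1(X_{K'};\Z[G]\tpm)$ (rather than via Novikov Betti and torsion numbers over the Novikov ring $\Z((t))$, as Goda--Pajitnov state it) is harmless because $\Z((t))$ is flat over $\zt$, so the relevant ranks agree and Lemma \ref{lem:deltazero} gives the nonvanishing you need.
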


This theorem is an immediate consequence of Theorem \ref{mainthmfibintro} and  results of Goda and Pajitnov \cite[Theorem~4.2~and~Corollary~4.6]{GP05}.
The statement is similar in spirit to a result by Pajitnov (see \cite[Proposition~4.2]{Pa10}) on the tunnel number of multiples of a given knot.

Now let $(N,\phi)$ be a fibered 3--manifold pair. In that case $\ker(\phi\colon \pi_1(N)\to \Z)$ is the fundamental group of a surface, in particular it is a finitely generated group, it follows that
 the $\phi$-cover of $N$ admits only countably many finite covers.
 The following theorem, which follows from combining Theorem \ref{mainthmfibintro} with work of Silver and Williams (see \cite{SW09a,SW09b}), says that the converse to the above statement holds true.

\begin{theorem}\label{thm:fibcovers}
Let $N$  be a 3-manifold  and let $\phi\in H^1(N)=\hom(\pi_1(N),\Z)$.
If $(N,\phi)$ does not fiber over $S^1$, then the $\phi$-cover of $N$ admits uncountably many finite covers.
\end{theorem}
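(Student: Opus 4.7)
The plan is to deduce this directly from Theorem \ref{mainthmfibintro} by invoking the Silver--Williams correspondence \cite{SW09a,SW09b} between the vanishing of twisted Alexander polynomials and the subgroup structure of the $\phi$-cover. The two results are essentially complementary: Theorem \ref{mainthmfibintro} produces, for any nonfibered pair, a finite representation with vanishing twisted Alexander polynomial, while the work of Silver and Williams translates such a vanishing into an abundance of finite-index subgroups of $\pi_1(\tilde{N})$, where $\tilde{N} \to N$ denotes the $\phi$-cover.

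More concretely, I would first apply Theorem \ref{mainthmfibintro} to the nonfibered pair $(N,\phi)$ to obtain an epimorphism $\alpha\colon \pi_1(N) \to G$ onto a finite group with $\Delta_{N,\phi}^\alpha = 0$. By Lemma \ref{lem:deltazero}, this is equivalent to the statement that $H_1(N;\zgt)$ fails to be $\zt$-torsion. Next, I would reinterpret this vanishing in terms of $\tilde{N}$: standard Shapiro-type identifications present $H_1(N;\zgt)$ as the first homology $H_1(\tilde{N};\Z[G])$ of the $\phi$-cover, viewed as a $\zt$-module via the deck transformation, with $\alpha$ restricted to $\ker(\phi)$ providing the $\Z[G]$-coefficients. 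The failure of $\zt$-torsion therefore means that $\pi_1(\tilde{N})^{ab} \otimes \Z[G/\alpha(\ker\phi)]$ admits a nonzero free $\zt$-summand. Finally, I would invoke the Silver--Williams result, which says precisely that such a nonzero $\zt$-rank gives rise to uncountably many $\zt$-invariant finite quotients, each of which pulls back to a finite-index subgroup of $\pi_1(\tilde{N})$, and hence to a finite cover of $\tilde{N}$.

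The main obstacle is the last translation: Silver and Williams phrase their results in the language of representation shifts, subgroup growth, and dynamical entropy, rather than directly in terms of cardinalities of covers. To get \emph{uncountably} many (rather than merely infinitely many) finite covers, one uses that a free $\zt$-module of positive rank has, for any prime $p$, continuum many distinct finite-index $\F_p$-submodules (parametrized, for instance, by infinite sequences of independent reductions), and each such submodule lifts to a finite-index normal subgroup of $\pi_1(\tilde{N})$ containing $\ker(\alpha)\cap \pi_1(\tilde{N})$ and the commutator subgroup. This cardinality bookkeeping is the only technical point beyond citing \cite{SW09a,SW09b}; it is unavoidable because a finitely generated group has only countably many finite-index subgroups, so the construction necessarily exploits the fact that $\pi_1(\tilde{N})$ is not finitely generated, which is itself a consequence of nonfiberedness by Stallings' fibration theorem.
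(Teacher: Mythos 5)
Your route matches the paper's: both deduce the result by combining Theorem \ref{mainthmfibintro} with the Silver--Williams theorem \cite[Theorem~3.4]{SW09b}, and both leave the carry-over from knot exteriors to general $3$-manifolds as a routine verification. Your elaboration of the Silver--Williams mechanism is the right picture, but two spots are imprecise enough to be worth fixing. First, ``not $\zt$-torsion'' for a finitely generated $\zt$-module yields a free $\zt$-\emph{submodule} (a nonzero torsion-free element generates a copy of $\zt$), not in general a free \emph{summand}; the summand claim is an overstatement and is not needed. Second, the phrase ``continuum many distinct finite-index $\F_p$-submodules'' of a free $\zt$-module is false as written: finite-index $\zt$-submodules of $\zt^r$ (equivalently, finite-colength $\F_p[t^{\pm1}]$-submodules of the reduction mod $p$) are only countable, because $\F_p[t^{\pm1}]$ is a PID with countably many primes. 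What the argument needs is the much weaker structure: $M/pM$ as an $\F_p$-\emph{vector space} is infinite-dimensional, hence has continuum many finite-codimension $\F_p$-subspaces, each of which pulls back to a finite-index subgroup of $M$ and hence of $\pi_1$ of the relevant cover. To guarantee $M/pM$ is infinite-dimensional one must choose $p$ appropriately — take a nonzero $\zt$-linear map $f\colon M\to\zt$ and a nonzero $q(t)\in\operatorname{im}(f)$, and pick any $p$ not dividing all coefficients of $q$ — which is a small but necessary point your sketch skips. With those corrections your proposal is a faithful expansion of the proof the paper leaves implicit.
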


For knot exteriors this result is an immediate consequence of Theorem \ref{mainthmfibintro}  and
a result of Silver and Williams \cite[Theorem~3.4]{SW09b} for knots (see also \cite{SW09a}). It is straightforward to verify that the argument by Silver and Williams  carries over to the general case.

Note that Theorem \ref{thm:fibcovers} can be viewed as a significant strengthening of Stallings' fibering theorem (see \cite{St62}), which says that a class $\phi\in H^1(N)=\hom(\pi_1(N),\Z)$
is fibered if and only if $\ker(\phi)$ is finitely generated.

\section{Symplectic  $4$--manifolds with a free circle action} \label{section:symp}

In this section we will prove Theorem \ref{thm:symp}.

\subsection{Preliminaries} \label{algtop}
\label{section2} We start by recalling some elementary facts about the algebraic topology of a $4$--manifold $M$ that carries a free circle action. The free circle action renders $M$ the total space of a circle bundle $p \colon M \to N$ over the orbit space, with Euler class $e \in H^2(N)$. For the purpose of proving Theorem \ref{thm:symp}, we will see that we can limit the discussion to the case where the Euler class is nontorsion, so we will make this assumption for the rest of the subsection.
The Gysin sequence reads
\[ \xymatrix{  H^{0}(N) \ar[d]^\cong \ar[r]^{\cup \hspace*{1pt} e}& H^2(N) \ar[d]^\cong \ar[r]^{p^{*}}&
H^2(M) \ar[d]^\cong \ar[r]^{p_{*}} &H^{1}(N) \ar[d]^\cong \ar[r]^{\cup \hspace*{1pt} e} &H^3(N)\ar[d]^\cong \\
H_3(N)\ar[r]^{\cap e}&H_1(N)\ar[r]&H_2(M)\ar[r]^{p_*}&H_2(N)\ar[r]^{\cap e}&H_0(N),}
\]
where $p_* \colon H^{2}(M) \to H^{1}(N)$ denotes integration along the fiber.
In particular we have \[ 0 \to
\langle e \rangle \to H^2(N) \xrightarrow{p^{*}} H^2(M) \xrightarrow{p_{*}} \mbox{ker}( \cdot e)
\to 0, \] where $\langle e \rangle$ is the cyclic subgroup of $H^2(N)$
generated by the Euler class and where $\mbox{ker}( \cdot e)$ denotes the subgroup of  elements in $H^1(N)$ whose pairing
with the Euler class vanishes. As $e$ is nontorsion, it follows that
$b_2(M)=2b_1(N) - 2$.
 It is not difficult to verify that $\operatorname{sign}(M) = 0$, hence $b_2^+(M)=b_1(N) - 1$.

Let $\a\colon \pi_1(N)\to G$ be a homomorphism to a finite group. We denote by $\pi\colon  N_{G} \to N$ the regular $G$--cover of $N$. It is well known that $b_1(N_{G}) \geq b_1(N)$.  If $\pi\colon M\to N$ is a circle bundle with Euler class $e\in H^2(N)$
 then $\a$ determines a regular $G$--cover of $M$ that we will
denote (with slight abuse of notation) $\pi\colon  M_{G} \to M$. These covers are related by the
commutative diagram
\begin{equation}
\xymatrix{  M_G \ar[r]^{\pi} \ar[d] & M\ar[d] \\
N_G \ar[r]^{\pi}  & N}
\ee
where the circle bundle
 $p_{G} \colon  M_{G} \to N_{G}$ has Euler class $e_{G} = \pi^{*} e \in
H^{2}(N_G)$ that is nontorsion as $e$ is.

\subsection{Seiberg-Witten theory for symplectic manifolds with a circle action}

In this section we will apply, for the class of manifolds we are studying, Taubes' nonvanishing theorem for Seiberg--Witten invariants of symplectic manifold to get a restriction on the class of orbit spaces of a free circle action over a symplectic $4$--manifold. In order to do so, we need to understand the Seiberg-Witten invariants of
$M$. Again, we will limit the discussion here to the case where the Euler class $e \in H^2(N)$ is not torsion. (The torsion case will be treated as a corollary of \cite{FV11a}.)
The essential ingredient is the fact that the Seiberg-Witten invariants of $M$ are related to the Alexander polynomial of $N$. Baldridge proved the following result, that combines Corollaries 25 and 27 of \cite{Ba03} (cf. also \cite{Ba01}), to which we refer the reader for definitions and results for Seiberg-Witten theory in this set-up:

\begin{theorem} \label{bald} \label{thm:baldridge} \textbf{\emph{(Baldridge)}} Let $M$ be a $4$--manifold admitting a free circle action with nontorsion Euler class $e \in H^2(N)$, where $N$ is the orbit
space. Then the Seiberg-Witten invariant $SW_{M}(\kappa)$ of a class $\kappa = p^{*} \xi \in p^*
H^{2}(N) \subset H^{2}(M)$ is given by the formula \begin{equation} \label{baldfor} SW_{M}(\kappa)
= \sum_{l \in \Z} SW_{N}(\xi+le) \in \Z,  \end{equation} in particular when $b_{2}^{+}(M) = 1$ it is independent of the chamber in which it was calculated. Moreover, if $b_{2}^{+}(M) > 1$, these are the only basic classes.
\end{theorem}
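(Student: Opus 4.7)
The plan is to follow Baldridge's approach and exploit the free $S^1$ action in order to reduce the four-dimensional Seiberg--Witten theory on $M$ to three-dimensional Seiberg--Witten theory on $N$. First I would choose an $S^1$-invariant Riemannian metric on $M$ (obtained from a connection on the circle bundle $p\colon M\to N$ together with a metric on $N$) and an $S^1$-invariant perturbation of the monopole equations. With these choices the $S^1$ action lifts to the configuration space of pairs (connection on a $\spinc$ structure, spinor), descends to the Seiberg--Witten moduli space, and preserves the signed count defining $SW_M$.

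Next I would show that basic classes must be $S^1$-invariant, and hence of the form $\kappa=p^*\xi$ for some $\xi\in H^2(N;\Z)$: a basic class that were not $S^1$-invariant would generate a continuous family of distinct basic classes under the action, violating the finiteness of the set of basic classes. This already yields the last assertion of the theorem in the $b_2^+(M)>1$ regime. Since the $S^1$ action on $M$ is free, every irreducible $S^1$-invariant monopole on $M$ descends to an honest monopole on $N$; conversely, given a $\spinc$ structure on $N$ whose first Chern class equals $\xi+le$ for some $l\in\Z$, one can pull it back to $M$ and tensor with the unique $S^1$-equivariant line bundle realizing $l$ times the fiber class to recover a four-dimensional monopole in the class $\kappa=p^*\xi$. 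This should establish a bijection between the perturbed moduli space on $M$ representing $\kappa$ and the disjoint union, over $l\in\Z$, of the perturbed moduli spaces on $N$ representing $\xi+le$.

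Granting this bijection at the level of moduli spaces, formula \eqref{baldfor} follows from comparing signed counts, once one checks that the orientation conventions and the reducible loci match up, and that the dimensional reduction identifies the four-dimensional signed count with the three-dimensional invariant $SW_N$ in the Meng--Taubes normalization. The sum on the right-hand side is finite because $SW_N$ is supported on finitely many $\spinc$ structures. For $b_2^+(M)=1$, chamber independence of the right-hand side is automatic (the three-dimensional invariants are intrinsically defined), and any apparent wall-crossing on the four-dimensional side must cancel in pairs across the sum over $l$. I expect the principal technical obstacle, as always in equivariant gauge theory, to be establishing transversality for the $S^1$-equivariant problem while carefully tracking orientations and reducibles on both sides of the bijection; this is the core of the analysis in \cite{Ba01, Ba03}.
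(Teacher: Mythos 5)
The result you are sketching is not proved in this paper: Theorem \ref{thm:baldridge} is attributed to Baldridge, and the authors explicitly state that it ``combines Corollaries 25 and 27 of [Ba03]'' and refer the reader there. So there is no in-paper argument to compare against; what follows evaluates your reconstruction on its own terms.

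Your overall dimensional-reduction strategy (invariant metric and perturbation, descent of invariant monopoles from $M$ to $N$, comparison of signed counts, care with orientations and reducibles) is indeed the shape of Baldridge's proof. But the step you use to conclude that basic classes lie in $p^*H^2(N)$ is incorrect. You argue that ``a basic class that were not $S^1$-invariant would generate a continuous family of distinct basic classes under the action, violating finiteness.'' This cannot work: since $S^1$ is path-connected, the induced action on $H^2(M;\Z)$ is trivial, so \emph{every} class in $H^2(M)$ is $S^1$-invariant and no orbit is ever nontrivial. In particular ``$S^1$-invariant'' is not a substitute for ``pulled back from $N$''; $p^*H^2(N)$ is the kernel of $p_*\colon H^2(M)\to H^1(N)$ in the Gysin sequence, which is generally a proper subgroup. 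The genuine content here is that for a $\spinc$ structure $\kappa$ with $p_*\kappa\neq 0$, the $S^1$-equivariant perturbed moduli space is generically empty, so $SW_M(\kappa)=0$. Establishing that requires the dimensional-reduction analysis itself (showing that invariant solutions exist only for pull-back $\spinc$ structures), not an a priori finiteness argument on cohomology classes. Once that is in place, your bijection of moduli spaces and the orientation/reducible bookkeeping are exactly where the remaining technical work lies, as in \cite{Ba01,Ba03}.
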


As  $H^2(N)$ acts freely and transitively on $\spin^c(N)$  (and similarly for $M$), we use the existence of a product spin$^c$ structure on $N$ (respectively $M$) to henceforth identify the set of spin$^c$ structures with $H^2(N)$ (respectively $H^2(M)$). Observe that, with this identification, the first Chern class of a spin$^c$ structure on $N$ satisfies $c_1(\xi) = 2 \xi \in H^2(N)$ (and similarly on $M$.)

The Seiberg--Witten invariants of $N$ determine, via \cite{MT96}, the Alexander polynomial of $N$. Assuming that a manifold $M$ as above is symplectic, we will use Taubes' constraints on its Seiberg-Witten invariants and Baldridge's formula to get a constraint on the twisted Alexander polynomials of $N$. We start with a technical lemma.

\begin{lemma} \label{prop:pullback} Let $(M,\omega)$ be a symplectic manifold admitting a free circle
action with nontorsion Euler class $e \in H^2(N)$, where $N$ is the orbit space. Then the
canonical class $K \in H^2(M)$ of the symplectic structure is the pull-back of a class $\zeta \in
H^2(N)$, where $\zeta$ is well--defined up to the addition of a multiple of $e$.\end{lemma}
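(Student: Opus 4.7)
The key observation is that the Gysin sequence from Subsection~\ref{section2} identifies the kernel of $p_* \colon H^2(M) \to H^1(N)$ with the image of $p^* \colon H^2(N) \to H^2(M)$, while $\ker(p^*) = \langle e \rangle$. Therefore, to prove the lemma it suffices to show that $p_* K = 0 \in H^1(N)$; the uniqueness of $\zeta$ modulo multiples of $e$ then follows at once from the exactness at $H^2(N)$.

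The plan is to test $p_* K$ against an arbitrary homology class $[\gamma] \in H_1(N)$ coming from an embedded oriented loop $\gamma \subset N$, and to apply Taubes' adjunction inequality to a corresponding torus in $M$. Choose a tubular neighborhood $\nu(\gamma) \cong S^1 \times D^2$; since $H^2(S^1 \times D^2) = 0$, the restriction of the circle bundle $p$ to $\nu(\gamma)$ is trivial, so $T_\gamma := p^{-1}(\gamma) \subset M$ is an embedded torus with trivial normal bundle, and in particular $T_\gamma \cdot T_\gamma = 0$. By the defining property of integration along the fiber,
\[ \langle p_* K, [\gamma] \rangle = \langle K, [T_\gamma] \rangle = K \cdot T_\gamma, \]
so the task reduces to showing $K \cdot T_\gamma = 0$ for every such $\gamma$.

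If $[T_\gamma] = 0$ in $H_2(M;\Q)$ there is nothing to prove. Otherwise, Taubes' symplectic adjunction inequality applied to the genus-one embedded surface $T_\gamma$ with $T_\gamma \cdot T_\gamma = 0$ inside $(M,\omega)$ yields $|K \cdot T_\gamma| + T_\gamma \cdot T_\gamma \leq 2g(T_\gamma) - 2 = 0$, forcing $K \cdot T_\gamma = 0$. Running this over all loops $\gamma$ shows that $p_* K = 0$, whence the Gysin sequence produces the desired $\zeta \in H^2(N)$ with $K = p^* \zeta$ and $\zeta$ well defined up to the addition of a multiple of $e$.

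The step I expect to be the main obstacle is the invocation of the adjunction inequality in the borderline case $b_2^+(M) = 1$ (equivalently $b_1(N) = 2$), where the usual Seiberg--Witten derivation of adjunction is chamber-dependent and one must argue that Taubes' estimate remains valid in the chamber determined by the symplectic form $\omega$; for $b_2^+(M) > 1$ the inequality is a standard consequence of the Seiberg--Witten theory developed by Kronheimer--Mrowka and Morgan--Szab\'o--Taubes. Once the adjunction inequality is justified uniformly in both cases, the rest of the argument is formal.
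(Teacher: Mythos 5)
Your proof is correct but follows a genuinely different route from the paper's. You reduce the lemma to the single clean statement $p_*K=0$ and prove it by pairing against every loop $\gamma\subset N$, using the tori $T_\gamma=p^{-1}(\gamma)$ together with the symmetrized adjunction inequality $|K\cdot T_\gamma|+T_\gamma^2\le 2g-2$; the Gysin exactness then produces $\zeta$ and its ambiguity by $\langle e\rangle$. The paper instead splits on $b_2^+(M)$. For $b_2^+(M)>1$ it quotes Baldridge's structure theorem (Theorem \ref{bald}), which says \emph{all} basic classes of $M$ are pull-backs, and combines this with Taubes' theorem that $\pm K$ is basic; no adjunction argument is needed. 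For $b_2^+(M)=1$ it constructs a single torus $T$ with $\omega\cdot[T]>0$ and $T^2=0$, shows $K\cdot\omega\ge 0$ (via Liu's classification of rational/ruled cases) and $K^2=0$, and then plays the light-cone lemma (giving $K\cdot[T]\ge0$) against the Li--Liu adjunction inequality (giving $K\cdot[T]\le0$) to conclude $K$ and $PD([T])$ are parallel null vectors. What your approach buys is uniformity: one argument covering both values of $b_2^+$, and a cleaner reduction to the Gysin sequence. What the paper's approach buys is that the $b_2^+>1$ case becomes trivial given Baldridge, and the $b_2^+=1$ case is handled via explicit light-cone geometry rather than a chamber-dependent adjunction estimate. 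The obstacle you flag in the $b_2^+=1$ case is real but resolvable by exactly the reference the paper itself uses: Li--Liu \cite{LL95} establish the needed adjunction inequality in the Taubes chamber, so your argument can be completed along those lines. One minor point of phrasing: the adjunction inequality you invoke is due to Kronheimer--Mrowka, Morgan--Szab\'o--Taubes, and (for $b_2^+=1$) Li--Liu, with Taubes' contribution being that the canonical class is a basic class with $SW=\pm1$; attributing the inequality to Taubes is a slight misattribution, though the overall logic is sound.
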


\begin{proof}
If $b^{+}_{2}(M) > 1$ this is a straightforward consequence of Theorem \ref{bald}, as the canonical
class by \cite{Ta94} is a basic class of $M$, hence must be the pull-back of a class of $H^2(N)$. The case
of $b_{2}^{+}(M) = 1$ can be similarly obtained by a careful analysis of the chamber structure of the Seiberg-Witten invariants for classes that are not pulled back from $N$, but it is possible to use a quicker argument. First, observe that as the $2$-torsion of $H^2(M)$ is contained in $p^{*} H^2(N)$, a spin$^c$ structure on $M$ is pull-back if and only if its first Chern class is pull--back. Next,  starting from a closed curve in
$N$ representing a suitable element of $H_1(N)$, we can identify a torus $T \subset M$ of
self--intersection zero, representing the generator of a cyclic subgroup in the image of the map
$H_1(N) \to H_2(M)$ in the homology Gysin sequence, that satisfies $\omega \cdot [T] > 0$.  Second
we can assume, by \cite{Liu96}, that $K \cdot \omega \geq 0$. (Otherwise $M$ would be a rational
or ruled surface; using classical invariants, the only possibility would be $M = S^2 \times T^2$, but then $e = 0$.)  Also, as both signature and
Euler characteristic of $M$ vanish, $K^2 = 2 \chi(M) + 3 \sigma(M) = 0$. Omitting the case of $K$
torsion, where the statement is immediate, we deduce that both $K$ and (the Poincar\'e dual of)
$[T]$ lie in the closure of the forward positive cone in $H^2(M,\R)$ determined by $\omega$. The
light-cone Lemma (see e.g. \cite{Liu96}) asserts at this point that $K \cdot [T] \geq 0$. On the
other hand, the adjunction inequality of Li and Liu (see \cite{LL95}) gives $K \cdot [T] \leq  0$,
hence $K \cdot [T] = 0$. It now follows that $K$ is a multiple of $PD([T])$, in particular the
pull-back of a class on $N$.
\end{proof}

Let then $(M,\omega)$ be a symplectic manifold admitting a free circle action with orbit space $N$ and nontorsion Euler
class $e \in H^{2}(N)$. As $[\omega]^2 > 0$ it follows from Section \ref{algtop} that $[\omega] \notin
p^*(H^{2}(N,\R))$, and in particular $p_{*}[\omega] \neq 0 \in H^{1}(N;\R)$. Using openness of the
symplectic condition, we can assume that $[\omega] \in H^2(M;\R)$ lies in the rational lattice
(identified with) $H^2(M;\Q)$. After suitably scaling $\omega$ by a rational number if needed, the
class $p_{*}[\omega]$ is then (the image of) a primitive (in particular, nonzero) class in
$H^1(N;\Z)$ that we denote by $\phi$.

We are in position now to use Equation (\ref{baldfor}) to obtain the following.

\begin{proposition} \label{monicness} Let $(M,\omega)$ be a symplectic manifold admitting a free circle
action with nontorsion Euler class such that $\phi = p_{*}[\omega] \in H^{1}(N)$ is an integral class on the orbit space $N$.
 Then for all epimorphisms $\alpha \colon  \pi_1(N) \to G$ to a finite group the twisted Alexander polynomial
 $\Delta_{N,\phi}^{\alpha} \in \Z[t^{\pm 1}]$ is monic of Laurent degree
 \[ \deg \, \Delta_{N,\phi}^{\alpha} = |G| \, \zeta \cdot \phi + 2 \div \, \phi_{G}.\]
 Here,  $\zeta \in H^2(N)$ is a class whose pull--back to $M$ gives the canonical
 class of $M$. Furthermore $\phi_{G}$ denotes the restriction of
 $\phi \colon  \pi_1(N) \to \Z$ to $Ker \a$ and $\div \, \phi_G$ stands for the divisibility of $\phi_G$. \end{proposition}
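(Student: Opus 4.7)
\emph{Proof plan.} The strategy is to lift the problem to the finite $G$-cover, apply Taubes' constraints on the Seiberg--Witten invariants of the symplectic 4-manifold $M_G$, and then translate the resulting numerical constraints back to $\Delta_{N,\phi}^{\alpha}$ via Baldridge's formula (Theorem~\ref{thm:baldridge}), Meng--Taubes, and a Shapiro-lemma identification. Setup: $\omega_G := \pi^{*}\omega$ is a symplectic form on $M_G$ whose canonical class is $K_G = \pi^{*}K = p_G^{*}(\pi^{*}\zeta)$, so by Lemma~\ref{prop:pullback} it is the pull-back of $\zeta_G := \pi^{*}\zeta \in H^{2}(N_G)$. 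Multiplicativity of intersection pairings under finite covers gives $p_{G,*}[\omega_G] = \phi_G$ and $\zeta_G \cdot \phi_G = |G|\,\zeta\cdot\phi$.

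Core argument: Taubes' theorem \cite{Ta94} supplies $SW_{M_G}(K_G) = \pm 1$ together with the extremality $\kappa\cdot[\omega_G] \leq K_G\cdot[\omega_G]$ for every basic class $\kappa$, equality only for $\kappa = K_G$ (taken in the symplectic chamber when $b_2^{+}(M_G)=1$). Via Theorem~\ref{thm:baldridge}, all basic classes of $M_G$ pull back from $N_G$, and the chamber-independent identity $SW_{M_G}(p_G^{*}\xi) = \sum_{l\in\Z}SW_{N_G}(\xi + l e_G)$ together with $p_G^{*}\xi \cdot [\omega_G] = \xi\cdot\phi_G$ converts the above into a statement on $N_G$: the function $\xi \mapsto \sum_{l} SW_{N_G}(\xi + l e_G)$ vanishes whenever $\xi\cdot\phi_G > \zeta_G\cdot\phi_G$, and equals $\pm 1$ at the extremal value.

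Translation to Alexander polynomials: by Meng--Taubes \cite{MT96} the Seiberg--Witten function of the closed $3$-manifold $N_G$ assembles, after summing along fibers of $H^{2}(N_G)\to\Z$ given by pairing with $\phi_G$, into the one-variable Alexander polynomial $\Delta_{N_G,\phi_G}(t)$, up to the normalization factor $(t^{\div\phi_G}-1)^{1+b_3(N_G)} = (t^{\div\phi_G}-1)^{2}$ characteristic of the closed case. Combining this with the extremality conclusion above shows that $\Delta_{N_G,\phi_G}$ is monic of Laurent degree $\zeta_G\cdot\phi_G + 2\div\phi_G$. Finally, a standard Shapiro-type identification (cf.\ Section~\ref{sectionufd} and \cite[Section~6]{FV10a}) gives a natural isomorphism of $\zt$-modules $H_{1}^{\phi\otimes\alpha}(N;\Z[G]\tpm) \cong H_{1}^{\phi_G}(N_G;\zt)$, whence $\Delta_{N,\phi}^{\alpha} \doteq \Delta_{N_G,\phi_G}$ in $\qt$ (up to a unit). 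Combining with $\zeta_G\cdot\phi_G = |G|\,\zeta\cdot\phi$ yields the claimed formula and monicness.

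The main obstacle is twofold. First, the chamber issue when $b_2^{+}(M_G) = 1$: Taubes' extremality is stated in the symplectic chamber, while Baldridge's sum is chamber-independent, so I need to verify that the extremal Seiberg--Witten contributions of $M_G$ are not cancelled by terms supported in a different chamber (Theorem~\ref{thm:baldridge} asserts exactly this). Second, the bookkeeping of the Meng--Taubes normalization on a closed $3$-manifold, where the $(t^{\div\phi_G}-1)^{2}$ factor is responsible for the precise shift $+2\div\phi_G$ in the degree; matching this against the extremal pairing $\zeta_G\cdot\phi_G$ is what produces the formula in the form announced, exactly parallel to Theorem~\ref{thm:fv08} with $\|\phi\|_T$ replaced by $\zeta\cdot\phi$.
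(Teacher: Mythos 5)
Your proposal is correct and follows essentially the same route as the paper: you use Baldridge's formula together with Meng--Taubes to express the one-variable Alexander polynomial (with the $(t^{\div\phi}-1)^2$ normalization) in terms of $SW_M$, apply Taubes' nonvanishing and extremality to identify the monic top coefficient and degree, and then pass to the $G$-cover via $\Delta_{N,\phi}^{\alpha} = \Delta_{N_G,\phi_G}$ and the scaling $\zeta_G\cdot\phi_G = |G|\,\zeta\cdot\phi$. The only difference is one of presentation: the paper first establishes the degree/monicness constraint for the untwisted $\Delta_{N,\phi}$ in full generality and then applies it verbatim to the cover $(N_G,\phi_G)$, whereas you run the argument directly on $M_G$; this is a cosmetic reorganization of the same computation.
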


\begin{proof} The proof follows by application of Taubes' results (\cite{Ta94,Ta95}) on the Seiberg--Witten invariants of finite covers of $M$ to impose constraints on
the twisted Alexander polynomials of $N$. We will first analyze the constraints on the ordinary
$1$--variable Alexander polynomial $\Delta_{N,\phi}$. By \cite[Theorem~3.5]{FV08a} we can write this polynomial
as \be \label{equ:onemulti} \Delta_{N,\phi} = (t^{div {\phi}} - 1)^2 \cdot \sum_{g \in H} a_{g}
t^{\phi (g)} \in \Z[t^{\pm 1}], \ee where $H$ is the maximal free abelian quotient of $\pi_{1}(N)$
and $\Delta_{N} = \sum_{g \in H} a_{g} \cdot g \in \Z[H]$ is the ordinary multivariable Alexander
polynomial of $N$. By Meng and Taubes (\cite{MT96}) the latter is related to the Seiberg--Witten
invariants of $N$ via the formula
\begin{equation} \label{mengtaubes} \sum_{g \in H} a_{g}
\cdot g = \pm \sum_{\xi \in H^{2}(N)} SW_{N}(\xi) \cdot  f(\xi) \in \Z[H],
\end{equation}
where $f$ denotes the composition of Poincar\'e duality with the quotient map $f\colon  H^2(N) \cong
H_{1}(N) \to H$.
Using this formula, we can write
\be \label{inter} \Delta_{N,\phi} = \pm (t^{div {\phi}} - 1)^2 \sum_{\xi\in H^2(N)} SW_N(\xi)t^{\phi \cdot \xi}. \ee
We will use now Equation (\ref{baldfor}) to write $\Delta_{N,\phi}$ in terms of the $4$--dimensional Seiberg-Witten invariants of $M$.
In order to do so, observe that for all classes $\xi \in H^2(N)$ we can write $\xi \cdot \phi = \xi \cdot p_{*} \omega = p^{*} \xi \cdot \omega = \kappa \cdot \omega$ where $\kappa = p^{*} \xi$. Grouping together the contributions of the $3$--dimensional basic classes in terms of their image in $H^2(M)$, and using the fact that $(\xi + l e) \cdot \phi = \xi \cdot\phi$ and  (\ref{baldfor}) we get
\[ \ba{rcl} \Delta_{N,\phi} & = & \pm (t^{div \, \phi}-1)^2 \sum\limits_{\kappa \in p^{*} H^2(N)} \sum\limits_{l} SW_N(\xi + le) t^{\phi \cdot \xi} \\ \\ & = & \pm (t^{div \, \phi}-1)^2 \sum\limits_{\kappa \in p^{*} H^2(N)} SW_M(\kappa)t^{\kappa \cdot
\omega }.\ea \]

Taubes' constraints, applied to the symplectic manifold $(M,\omega)$, assert that the canonical spin$^c$ structure $\kappa_{\omega}$, with first Chern class $c_1(\kappa_{\omega}) = - K = - p^{*} \zeta \in H^{2}(M)$, satisfies $SW_{M}(\kappa_{\omega}) = 1$. Moreover, for all spin$^c$ structure $\kappa$ with $SW_{M}(\kappa) \neq 0$, we have \begin{equation}
\label{more} \kappa_{\omega} \cdot \omega \leq \kappa \cdot \omega, \end{equation} with
equality possible only for $\kappa = \kappa_{\omega}$. (When $b^{+}_{2}(M) = 1$, this statement applies to the Seiberg-Witten invariants evaluated in Taubes' chamber, but as remarked in Theorem \ref{bald} this specification is not a concern in our situation.)
 It now follows that $\Delta_{N,\phi}$ is a monic
polynomial, and remembering the symmetry of $SW_N$ (or $\Delta_{N,\phi}$), we see that its Laurent degree is $d =-2 \kappa_{\omega}\cdot \omega  + 2 \div \phi =  K \cdot \omega + 2 \div \phi = \zeta \cdot \phi + 2 \div \phi$.

Consider now any symplectic $4$--manifold $M$ satisfying the hypothesis of the statement. Take an
epimorphism $\a \colon  \pi_1(N) \to G$ and denote, as in Section \ref{algtop}, by $N_{G}$ and $M_{G}$ the associated $G$--covers
of $N$ and $M$ respectively.
We will bootstrap the constraint on the ordinary Alexander polynomials to the
twisted Alexander polynomials associated to $\a \colon  \pi_1(N) \to G$. As $(M,\omega)$ is symplectic, $M_{G}$ inherits a symplectic form $\omega_{G} :=
\pi^{*}\omega$ with canonical class $K_G := \pi^{*} K$, that is easily shown to satisfy the
condition $\phi_{G} := (p_{G})_{*}[\omega_{G}] = \pi^{*}\phi \in H^{1}(N_{G})$. We can therefore apply the results
of the previous paragraph to the pair
$(N_G,\phi_G)$ to get a constraint for $\Delta_{N_G,\phi_G}$. This, together with the relation
$\Delta_{N,\phi}^{\a} = \Delta_{N_G,\phi_G}$ proven in \cite[Lemma~3.3]{FV08a} and some straightforward
calculations show that $\Delta_{N,\phi}^{\a}$ is monic of the degree stated. \end{proof}

Note that the adjunction inequality implies that $\zeta \cdot \phi \leq \| \phi \|_{T}$.
 If we had a way to show that this is an equality, then all twisted Alexander polynomials would have maximal degree. Theorem \ref{thm:fv08} would then suffice to show that  $\phi \in H^{1}(N)$ is fibered. However (except in the case of $S^1 \times N$ where we can use Kronheimer's theorem, see \cite{Kr99} and \cite{FV08a}) we are not aware of any direct way to show   that  $\zeta \cdot \phi = \| \phi \|_{T}$, which compels us to use the  stronger Theorem  \ref{mainthmfibintro}.

\subsection{Proof of Theorem \ref{thm:symp}}

For the reader's convenience we recall the statement of Theorem \ref{thm:symp}.

\begin{theorem} \label{thm:123}Let $N$ be a closed 3-manifold and let $p\colon M\to N$ be an $S^1$--bundle over $N$. We denote by $p_*\colon H^2(M;\R)\to H^1(N;\R)$ the map which is given by integration along the fiber.
Let $\psi\in H^2(M;\R)$.The following are equivalent:
\bn
\item $\psi$ can be represented by a symplectic structure,
\item $\psi$ can be represented by a symplectic structure which is $S^1$--invariant,
\item $\psi^2>0$ and  $\phi=p_*(\psi) \in H^1(N;\R)$  lies in the open cone on a fibered face of the Thurston norm ball of $N$.
\en
\end{theorem}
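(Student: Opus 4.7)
Only the implication `(1) implies (3)' requires new argument, since the other implications appear in prior work. Assume then that $\psi \in H^2(M;\R)$ is represented by a symplectic form $\omega$; the condition $\psi^2 > 0$ is automatic, and the task reduces to showing that $\phi = p_*(\psi) \in H^1(N;\R)$ lies in the open cone $F$ on a fibered face of the Thurston norm ball of $N$.

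My plan splits according to whether the Euler class $e \in H^2(N)$ is torsion. If $e$ is torsion, I would choose a finite cover $\pi_N \colon \tilde N \to N$ killing $e$ (which exists because $e$ has finite order in $H^2(N;\Z)$), so that the pullback circle bundle $\tilde M \to \tilde N$ is the trivial bundle $S^1 \times \tilde N$. Pulling $\omega$ back then gives a symplectic form on $\tilde M$, and naturality of integration along the fiber yields $\tilde p_*(\pi_M^* \psi) = \pi_N^* \phi$. The product case proved in \cite{FV11a} places $\pi_N^* \phi$ in the fibered cone of $\tilde N$, and since fiberedness of a class on a $3$-manifold is detected by any finite cover, $\phi$ lies in the fibered cone of $N$.

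In the nontorsion case (the substantial one) I would first reduce to an integer class: the symplectic cone is open in $H^2(M;\R)$ and $p_*$ is a $\Q$-linear surjection, so arbitrarily close to $\psi$ there exists a rational symplectic $\psi'$ whose image $\phi' = p_*(\psi')$, after clearing denominators, is a primitive integer class in $H^1(N;\Z)$. Proposition \ref{monicness} then applies and guarantees that $\Delta_{N,\phi'}^{\alpha}$ is monic, hence nonzero, for every epimorphism $\alpha\colon \pi_1(N)\to G$ onto a finite group. The contrapositive of Theorem \ref{mainthmfibintro}, the main theorem of the paper, then forces $\phi'$ to be fibered, i.e.\ $\phi' \in F$.

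Finally, I would bootstrap this integer conclusion back to the original real class $\phi$. Letting $V$ be an open neighborhood of $\psi$ inside the symplectic cone and $U = p_*(V)$ its image (an open neighborhood of $\phi$), the previous step shows that every rational ray through $U$ lies in $F$. If $\phi$ were not in $F$, then $U \cap F^c$ would be nonempty and relatively closed in $U$; but since the Thurston norm ball is a rational polytope, $F^c$ is a finite union of closed rational polyhedral cones, and the intersection of any such cone with the open set $U$ contains rational rays whenever nonempty, contradicting the previous step. Thus $\phi \in F$, completing the proof. The main obstacle is the nontorsion case, which hinges on combining Proposition \ref{monicness} (which encodes the Seiberg--Witten/Taubes/Baldridge input) with the newly proved vanishing Theorem \ref{mainthmfibintro}; the bootstrap from integer to real classes uses only the rational structure of the Thurston polytope.
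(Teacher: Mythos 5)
Your proof is correct and takes essentially the same route as the paper's: split on whether the Euler class is torsion, reduce to the trivial bundle $S^1\times N$ (handled by \cite{FV11a}) in the torsion case, and combine Proposition \ref{monicness} with Theorem \ref{mainthmfibintro} in the nontorsion integral case. The only divergence is that the paper performs the reduction from real to integral classes once at the outset by invoking \cite[Theorem~5]{Th86} together with \cite[Proposition~3.1]{FV10b}, whereas your explicit bootstrap via the rational polyhedral structure of the Thurston cone re-derives that reduction by hand.
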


\begin{proof}
The statement `(2) implies (1)' is trivial and the statement  `(3) implies (2)' is proved in \cite{FV10b}, generalizing earlier work of Thurston \cite{Th76}, Bouyakoub \cite{Bo88}
and Fern\'andez, Gray and Morgan \cite{FGM91}.

We now turn to the proof of `(1) implies (3)'.
If  $\psi\in H^2(M;\R)$ can be represented by a symplectic structure then it follows from the definition of being symplectic that $\psi^2>0$.

Note that by \cite[Theorem~5]{Th86} a class $\phi\in H^1(N;\R)$  lies in the open cone on a fibered face of the Thurston norm ball of $N$
if and only if $\phi$ can be represented by a non-degenerate closed 1-form. It now follows from \cite[Proposition~3.1]{FV10b} that it suffices to
prove `(1) implies (3)' for classes $\psi$ such that $p_*([\w])$ is an integral class.

 We first assume that the Euler class of the $S^1$-bundle is nontorsion.
Suppose that we have $\psi\in H^2(M;\R)$ which
can be represented by a symplectic structure  such that $p_{*} \psi \in H^{1}(N)$ is an integral class.  It follows immediately
from Proposition \ref{monicness} that $\Delta_{N,p_*(\psi)}^\a$ is nonzero for any  epimorphism $\alpha \colon  \pi_1(N) \to G$ onto a finite group.
Therefore, as a consequence of Theorem \ref{mainthmfibintro} it follows that $p_*(\psi)$ is a fibered class.

We now assume that the Euler class of the $S^1$-bundle is trivial, i.e. $M=S^1\times N$. This case has been completely solved in \cite{FV11a}. (More in the spirit of the present paper one can follow the argument for the case of nontorsion Euler class, replacing Proposition \ref{monicness}
by \cite[Proposition~4.4]{FV08a}.)

Finally, if the Euler class of the $S^1$-bundle is torsion, then it is well-known that there exists a finite cover of $N$ such that the pull-back $S^1$-bundle has trivial Euler class.
We refer to \cite[Proposition~3]{Bow09} and \cite[Theorem~2.2]{FV11c} for details.
Note that it is a consequence of Stallings' fibering theorem (see \cite{St62}) that an integral class $\phi \in H^1(N;\Z)$ is fibered if and only if the pull back to a finite cover
is fibered. The case of torsion Euler class now follows easily from this observation and from the product case. We leave the details to the reader.
\end{proof}


\end{document}